\newcommand{\vp}{\varphi}
\newcommand{\pa}{\partial}
\newcommand{\na}{\nabla}
\newcommand{\al}{\alpha}
\newcommand{\tl}{\tilde}
\newcommand{\wtl}{\widetilde}
\newcommand{\ov}{\overline}
\newcommand{\la}{\lambda}
\newcommand{\wg}{\wedge}
\newcommand{\bi}{\bar{i}}
\newcommand{\bj}{\bar{j}}
\newcommand{\bl}{\bar{l}}
\newcommand{\de}{\delta}
\newtheorem{thm}{Theorem}[section]
\newtheorem{cor}[thm]{Corollary}
\newtheorem{lem}[thm]{Lemma}
\newtheorem{prop}[thm]{Proposition}
\theoremstyle{definition}
\newtheorem{defn}[thm]{Definition}
\newtheorem{remk}[thm]{\normalfont{\textit{Remark}}}
\numberwithin{equation}{section}
\begin{document}
\title{Compact Hermitian surfaces with pointwise constant Gauduchon holomorphic sectional curvature}

\author{Haojie Chen, Xiaolan Nie}
\address{Department of Mathematics, Zhejiang Normal University, Jinhua Zhejiang, 321004, China}
\email{chj@zjnu.edu.cn, \ nie@zjnu.edu.cn }
\date{}
\maketitle

\begin{abstract} Motivated by a recent work of Chen-Zheng\cite{CZ} on Strominger space forms, we prove that  a compact Hermitian surface with pointwise constant holomorphic sectional curvature with respect to a Gauduchon connection $\nabla^t $ is either  K\"ahler, or an isosceles Hopf surface with an admissible metric and $t=-1$ or $t=3$. In particular,  a compact Hermitian surface with pointwise constant Lichnerowicz holomorphic sectional curvature is K\"ahler. We further generalize the result to the case for the two-parameter canonical connections introduced by Zhao-Zheng\cite{ZZ2}, which extends  a previous result by Apostolov-Davidov-Mu\v{s}karov\cite{ADM}.\\



\end{abstract}

{\let\thefootnote\relax\footnotetext{This research is partially supported by NSFC grants No. 11901530, No. 11801516, No. 12071161 and Zhejiang Provincial NSF grant No. LY19A010017.}}

\section{Introduction}
 This note mainly concerns Hermitian surfaces with pointwise constant holomorphic sectional curvature with respect to a Gauduchon connection $\na^t$. Given a Hermitian manifold $(M,J,g)$, there is a one-parameter family of canonical Hermitian connections $\nabla^t$ introduced by Gauduchon \cite{G} in 1997, which is the line joining the Chern connection $\nabla^c$, the Strominger connection $\nabla^s$ (also known as Bismut connection) and the Lichnerowicz connection $\na^l$. The last one is the restriction of the Levi-Civita connection $\nabla^{LC}$ on the holomorphic tangent bundle $T^{1,0}M$ (also known as the associated connection \cite{GK} or the induced Levi-Civita connection \cite{LY}). For any $t\in \mathbb R$, the Gauduchon connection is defined by $$\nabla^t=\frac{1+t}{2}\nabla^c+\frac{1-t}{2}\nabla^s=t\na^c+(1-t)\na^l.$$In particular, $\nabla^1=\nabla^c$, $\nabla^{-1}=\nabla^s$ and $\nabla^0=\nabla^l$  (see also \cite{ZZ2}). If $g$ is K\"ahler, then all $\nabla^t$ coincide with the Levi-Civita connection. Otherwise, they are mutually different. While each Gauduchon connection exhibits its own geometry, the properties of the whole connection family may reflect the intrinsic features of the Hermitian manifold. 

There has been many recent studies on the curvatures of special connections on Hermitian manifolds, e.g. for the Chern connection (\cite{B},\cite{BG},\cite{CCN},\cite{LU},\cite{LZ},\cite{RZ},\cite{T}), the Levi-Civita connection (\cite{ADM},\cite{KYZ},\cite{SS}), the Strominger connection  (\cite{CZ},\cite{WYZ},\cite{YZZ},\cite{ZZ19}) and the Lichnerowicz connection (\cite{GK},\cite{HLY},\cite{LY2}). There are also some recent work on the curvatures of general Gauduchon connections (see \cite{AOUV},\cite{VYZ},\cite{WY},\cite{YZ2},\cite{ZZ2}\ etc). Among various curvature notions, the holomorphic sectional curvature is a natural substitute of sectional curvature on Hermitian manifolds (see e.g. \cite{KN},\cite{Z} and the references therein). A fundamental question is to understand Hermitian manifolds with constant or pointwise constant holomorphic sectional curvature. In 1985, Balas-Gauduchon \cite{BG} prove that a compact Hermitian surface with constant nonpositive holomorphic sectional curvature with respect to $\na^c$ must be K\"ahler. This result was beautifully extended by Apostolov-Davidov-Mu\v{s}karov \cite{ADM} in 1996. They prove the following theorem, as an application of their classification of compact self-dual Hermitian surfaces. 
\begin{thm}[Apostolov-Davidov-Mu\v{s}karov] \label{thm1}
Any compact Hermitian surface with point-wise constant holomorphic sectional curvature with respect to the Levi-Civita connection or the Chern connection must be K\"ahler.
\end{thm}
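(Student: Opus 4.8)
The plan is to show that the pointwise constant holomorphic sectional curvature (PCHSC) hypothesis forces the torsion of the Chern connection to vanish, which on a Hermitian \emph{surface} is equivalent to the metric being K\"ahler. Here I would use that on a Hermitian surface $d\om=\theta\wg\om$ for the Lee form $\theta$, so that $d\om=0$---i.e. $g$ is K\"ahler---precisely when $\theta=0$. Since the Lichnerowicz connection $\na^l=\na^0$ is the restriction of $\na^{LC}$ to $T^{1,0}M$, the holomorphic sectional curvature of the Levi-Civita connection coincides with that of $\na^0$; hence both cases of the theorem fit into the single problem of analysing PCHSC for $\na^t$ at $t=0$ and $t=1$, and I would treat them in parallel. (This is the computational counterpart of Apostolov--Davidov--Mu\v{s}karov's original route through the classification of compact self-dual Hermitian surfaces, which I would keep in reserve as an alternative.)

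First I would turn PCHSC into pointwise algebraic identities by polarization. Writing $R^t_{i\bar{j}k\bar{l}}$ for the curvature of $\na^t$ and $c$ for the (a priori nonconstant) holomorphic sectional curvature, the condition $R^t_{i\bar{j}k\bar{l}}X^i\bar X^jX^k\bar X^l=c\,|X|^4$ for all $(1,0)$-vectors $X$ is equivalent, after symmetrizing in $(i,k)$ and in $(\bar{j},\bar{l})$, to
\begin{equation*}
R^t_{i\bar{j}k\bar{l}}+R^t_{k\bar{j}i\bar{l}}+R^t_{i\bar{l}k\bar{j}}+R^t_{k\bar{l}i\bar{j}}=2c\,(g_{i\bar{j}}g_{k\bar{l}}+g_{i\bar{l}}g_{k\bar{j}}).
\end{equation*}
In complex dimension two this is extremely rigid: in a unitary frame it fixes the totally symmetric part of $R^t$ completely in terms of $c$ and the metric, so that all the failure of K\"ahlerness is pushed into the antisymmetric combinations $R^t_{i\bar{j}k\bar{l}}-R^t_{k\bar{j}i\bar{l}}$.

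Next I would express these antisymmetric combinations through the torsion. The curvature $R^t$ differs from the Chern curvature $R^c$ by explicit first- and second-order torsion terms (Gauduchon's comparison formula), and the deviation from the first Bianchi symmetry $R^c_{i\bar{j}k\bar{l}}=R^c_{k\bar{j}i\bar{l}}$ is itself expressed through the torsion and its covariant derivative, which in dimension two is carried entirely by $\theta$. Contracting the polarized identity with $g^{i\bar{j}}g^{k\bar{l}}$ gives the scalar relation $s+\hat s=6c$ between the first and second Chern scalar curvatures, while its trace-free part records the remaining pointwise constraints, relating the off-diagonal curvature components to $\na\theta$ and $\theta\otimes\theta$ (for $t=1$ one also extracts that the ``mixed'' components such as $R^c_{1\bar{1}1\bar{2}}$ vanish, yielding extra algebraic control on $\na\theta$).

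The decisive step is integration over the compact manifold. On a compact Hermitian surface the integrals of the divergence terms such as $\int_M\delta\theta\,dV$ vanish, and one has Gauduchon's global relation between $\int_M s$, $\int_M\hat s$ and the torsion; combining these with the contracted PCHSC identities, the curvature function $c$ drops out and one is left with an identity of the shape $(\text{definite constant})\cdot\int_M|\theta|^2\,dV=0$. The sign of that constant is exactly what must be verified, and it is favourable for all $t$ except the two exceptional values $t=-1$ and $t=3$ of the general theorem---in particular at $t=0$ and $t=1$---so one concludes $\theta\equiv0$ and hence that $g$ is K\"ahler. I expect this sign computation, namely assembling the right linear combination of curvature contractions so that the surviving integrated torsion term is of definite sign, to be the main obstacle; the polarization and the local torsion identities should be essentially bookkeeping in the two-dimensional setting.
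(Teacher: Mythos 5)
Your reduction of the Levi-Civita case to the Gauduchon family is false, and this is not a cosmetic slip. The holomorphic sectional curvature of $\nabla^{LC}$ does \emph{not} coincide with that of the Lichnerowicz connection $\nabla^0$: projecting a connection onto $T^{1,0}M$ changes its curvature by a Gauss--Codazzi term quadratic in the second fundamental form, i.e.\ in the Chern torsion. This is visible in Lemma~\ref{lem2.01} at $t=0$: for integrable $J$ one has $R^0_{k\bar{l}i\bar{j}}=R_{k\bar{l}i\bar{j}}-\ov{T^k_{rj}}T^l_{ir}$, hence in a unitary frame $R^0_{1\bar{1}1\bar{1}}=R_{1\bar{1}1\bar{1}}+|T^1_{21}|^2$. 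So the two holomorphic sectional curvatures agree in every direction exactly when the torsion vanishes, i.e.\ exactly when $g$ is K\"ahler --- your identification assumes the conclusion. This is precisely why the paper introduces the two-parameter family $D^t_s$: the Levi-Civita case is $s=1$ (where $(1-t+ts)^2+s^2=2\neq 4$ in Theorem~\ref{thm2}), while the Lichnerowicz case is $(t,s)=(0,0)$, and these are different statements (the latter is this paper's Corollary 1.5, not a result of \cite{ADM}). As written, your proposal never addresses the Levi-Civita half of the theorem.

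The decisive step of your main route --- that suitable contractions integrate to $(\text{definite constant})\cdot\int_M|\theta|^2\,dV=0$, with favourable sign exactly for $t\notin\{-1,3\}$ --- is asserted rather than proved, and there is a structural reason it is not ``bookkeeping.'' The unknown function $c$ enters the polarized identity only through its pure-trace part (your $s+\hat{s}=6c$), so the only information available without knowing $c$ is the trace-free part; and, as the proof of Proposition~\ref{prop2.1} shows, the combinations of those identities in which the torsion cancels are precisely the three self-duality equations of Lemma~\ref{lem3.1}. Self-duality does not force K\"ahlerness: the standard Hopf metric is conformally flat, hence self-dual, and for the exceptional parameters the admissible Hopf metrics satisfy the full pointwise-constancy hypothesis. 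So any integral identity of the kind you describe would have to encode, purely through coefficient arithmetic in $t$ and $s$, the entire dichotomy of Theorem~\ref{thm2}; no such direct argument is known --- the integral/maximum-principle method of Balas--Gauduchon \cite{BG} reaches only constant $c\leq 0$ for the Chern connection, and every proof of the pointwise-constant case (\cite{ADM}, \cite{CZ}, and this paper) instead goes through the route you hold ``in reserve'': pointwise constancy $\Rightarrow$ self-duality (Proposition~\ref{prop2.1}), then the global classification of compact self-dual Hermitian surfaces (Theorem $1^{\prime}$ of \cite{ADM}, a twistor-theoretic input that integration by parts does not replace), then a conformal-change analysis (sections 4--6 here) handling the three model cases via maximum principles and the Hartogs/Taylor-series argument on the Hopf cover. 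That ``reserve'' route is the proof; your primary route has an unbridged gap at its central step, and the sign claim it rests on is exactly as strong as the theorem itself.
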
 
Recently,  an important new progress is made by  S. Chen and F. Zheng in  \cite{CZ}, where they study the (weak) Srominger space forms, i.e., compact Hermitian manifolds with (pointwise) constant Strominger holomorphic sectional curvature. In particular, they give a complete classification in dimension two and prove that
 \begin{thm}[Chen-Zheng]
Any compact Hermitian surface $(M, g)$ with pointwise constant Strominger holomorphic sectional curvature must be either K\"ahler or an isosceles Hopf surface with an admissible metric.
\end{thm}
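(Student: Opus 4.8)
The plan is to turn the pointwise curvature hypothesis into a pointwise tensor identity by polarization, simplify it drastically using the fact that torsion on a surface is encoded in a single one-form, and then feed the resulting local equations into a compactness argument to force the stated dichotomy.

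First I would polarize. The Strominger holomorphic sectional curvature of a unit vector $X$ is $R^s(X,\bar X,X,\bar X)$, and since $X^i\bar X^jX^k\bar X^l$ is symmetric under $i\leftrightarrow k$ and $\bj\leftrightarrow\bl$, only the symmetrization $\hat R^s$ of $R^s$ over these two pairs contributes. Applying the standard Berger polarization lemma (valid because $\hat R^s$ inherits the Hermitian reality symmetry of the curvature of a metric connection), the hypothesis that $\hat R^s(X,\bar X,X,\bar X)=c\,|X|^4$ for all $X$ at each point is equivalent to
\[ \hat R^s_{i\bj k\bl}=\frac{c}{2}\big(g_{i\bj}g_{k\bl}+g_{i\bl}g_{k\bj}\big),\qquad c=c(p). \]
The point to keep in mind is that, unlike the Chern curvature, $R^s$ does not already satisfy $R^s_{i\bj k\bl}=R^s_{k\bj i\bl}=R^s_{i\bl k\bj}$, so the symmetrization genuinely feeds torsion terms into the left-hand side.

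Next I would rewrite the identity in Chern data. Using the established formula relating the $t=-1$ connection curvature to the Chern curvature $R^c$, the Chern torsion $T$, its conjugate, and $\na^cT$ (as in \cite{ZZ2}), together with the two-dimensional fact that the torsion is completely determined by its trace, $T^i_{jk}=\delta^i_j\eta_k-\delta^i_k\eta_j$ with $\eta_k=\sum_i T^i_{ik}$ the torsion one-form, the displayed identity collapses into a small system of pointwise equations in $\eta$, $\eta\otimes\bar\eta$, the symmetrized derivative $\na^c\eta$, and the Chern Ricci tensors. Tracing this system relates $c$ to the Chern scalar curvature and $|\eta|^2$, while the trace-free part constrains the pointwise shape of $\eta$ and of $\na^c\eta$. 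I expect these trace-free equations to force a clean alternative: either $\eta\equiv0$, or $\eta$ is nowhere zero with $|\eta|$ locally constant and $\na^c\eta$ of a rigid isotropic form.

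If $\eta\equiv0$ the torsion vanishes and $g$ is K\"ahler. Otherwise I would pass to a global argument: choose the Gauduchon representative in the conformal class, so that the Lee one-form is co-closed on the surface, and integrate the divergence identity coming from the trace-free equations. I expect this to upgrade the local constancy of $|\eta|$ to global rigidity, showing that $g$ is Vaisman with parallel Lee form; the leaf geometry then identifies the universal cover with $\mathbb C^2\setminus\{0\}$ and $M$ with a Hopf surface, and the equality of the two torsion eigenvalues imposed by the symmetric identity is exactly the isosceles condition, the corresponding metric being the admissible one. The hardest part, I expect, is precisely this extraction of rigidity: controlling the $\na^cT$ contributions so they cannot conspire with the curvature to allow a non-Vaisman, nonconstant-$|\eta|$ profile, which is where compactness is indispensable and where adapting the self-dual surface analysis of Apostolov-Davidov-Mu\v{s}karov may give the cleanest route.
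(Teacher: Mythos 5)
Your opening step is exactly the paper's: pointwise constant Strominger holomorphic sectional curvature is equivalent, by polarization, to $\widehat{R}^{s}_{i\bar{j}k\bar{l}}=\frac{c}{2}(\delta_{ij}\delta_{kl}+\delta_{il}\delta_{kj})$ in unitary frames (Lemma \ref{lem2.03}), and in complex dimension two the Chern torsion is indeed encoded by the single form $\eta_k=T^i_{ik}$. The gap is in the rigidity you expect afterwards, and it is not a technical gap but a contradiction with the theorem's own conclusion. The admissible metrics $\widetilde{\omega}=\frac{c_0}{\xi_A}\omega_0$, $\xi_A=|z|^2+{}^tzAz+\overline{{}^tzAz}$, with $A\neq 0$, satisfy the hypothesis (their Strominger holomorphic sectional curvature is the pointwise constant, nonconstant-in-$z$ function $-\frac{1}{\xi_A}(4\,{}^tzA\bar{A}\bar{z}+{}^tzAz+\overline{{}^tzAz})$), yet their torsion one-form, while nowhere zero, does \emph{not} have locally constant norm: since $\eta$ is proportional to $\partial\log\xi_A$ in the conformal frame, one computes $|\eta|^2_{\widetilde{g}}$ to be, up to a positive constant, $\frac{|\bar{z}+2Az|^2}{\xi_A}=1+\frac{{}^tzAz+\overline{{}^tzAz}+4\,{}^t\bar{z}\bar{A}Az}{\xi_A}$, which is nonconstant whenever $A\neq 0$. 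For the same reason these metrics are not Vaisman: a parallel Lee form has constant norm, and on a Hermitian surface the Lee form is (up to a constant) the torsion one-form. So any argument that upgrades the curvature identity to ``either $\eta\equiv 0$ or $|\eta|$ locally constant'' and then to ``$g$ is Vaisman'' proves too much: it would eliminate precisely the non-K\"ahler examples the theorem asserts exist, leaving only constant multiples of the standard Hopf metric. The trace-free equations simply do not force what you expect them to force, and compactness cannot rescue a pointwise statement that is already false for these examples.

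The route that works --- the paper's, following Chen--Zheng --- is the one you relegate to a last-sentence fallback. The symmetrized identity is used only to verify the three equations characterizing self-duality of the underlying Riemannian $4$-manifold (Lemma \ref{lem3.1} and Proposition \ref{prop2.1}; the torsion-quadratic and $\nabla^c T$ contributions cancel in exactly those three combinations). Then Theorem $1'$ of \cite{ADM}, the classification of compact self-dual Hermitian surfaces, reduces everything to three conformal types: a constant-curvature K\"ahler metric, the product metric on $\mathbb{D}\times\mathbb{P}^1$ locally, or the standard Hopf metric on an isosceles Hopf surface. Finally a conformal-factor analysis settles each type: a maximum-principle argument shows the factor is constant in case (1) and that case (2) is impossible, while case (3) is the Hopf-manifold computation (Proposition \ref{prop4.2}), whose outcome is the full admissible family, including the non-Vaisman members with $A\neq 0$. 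If you want to salvage your plan, the global step must be replaced by something compatible with nonconstant $|\eta|$; in practice that is exactly the self-duality-plus-classification argument.
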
 
Motivated by the above results, we study Hermitian surfaces with pointwise constant holomorphic sectional curvature with respect to an arbitrary Gauduchon connection. Using similar techniques as in \cite{CZ}, we prove the following theorem. 
\begin{thm} \label{thm0}Let $(M, g)$ be a compact Hermitian surface with pointwise constant holomorphic sectional curvature with respect to a Gauduchon connection $\nabla^t$. Then either (i) $g$ is K\"aher, or (ii) $(M,g)$ is an isosceles Hopf surface with an admissible metric and in this case $t=-1$ or  $t=3$.
\end{thm}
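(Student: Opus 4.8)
The plan is to reduce the statement to the three parameters already settled, namely $t\in\{-1,0,1\}$ (the Strominger case by Chen--Zheng \cite{CZ}, the Lichnerowicz and Chern cases by Apostolov--Davidov--Mu\v{s}karov \cite{ADM}), by exploiting the polynomial dependence of the curvature on the connection parameter, and then to isolate the two exceptional values by a polarization-plus-integration argument. First I would compare $\nabla^t$ with the Chern connection. Writing $\Gamma=\nabla^s-\nabla^c$ for the (tensorial) difference, one has $\nabla^t=\nabla^c+\frac{1-t}{2}\,\Gamma$, so
$$R^t=R^c+\tfrac{1-t}{2}\,d^{\nabla^c}\Gamma+\big(\tfrac{1-t}{2}\big)^2[\Gamma\wedge\Gamma].$$
Hence, for a unit vector $X\in T^{1,0}M$, the Gauduchon holomorphic sectional curvature $H^t(X)$ is a quadratic polynomial in $s:=\frac{1-t}{2}$, with $s=0,\tfrac12,1$ recovering the Chern, Lichnerowicz and Strominger values. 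In complex dimension two the Chern torsion is governed by a single torsion $(1,0)$-form $\eta$, so all the $\Gamma$-terms above are expressible through $\eta$, $\nabla^c\eta$ and the Chern curvature; I would fix a local unitary coframe to record these components explicitly.

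Next I would impose that $H^t(\cdot)$ is pointwise constant and polarize. The condition $H^t(X)=\gamma(p)\,|X|^4$ for all $X$ forces the totally symmetrized curvature tensor to be a pure trace, so its traceless part vanishes, $\widehat{R^t}=0$. Since $\widehat{R^t}=\widehat{R^c}+s\,\widehat{d^{\nabla^c}\Gamma}+s^2\,\widehat{[\Gamma\wedge\Gamma]}$ is quadratic in $s$, this is one quartic-form identity in the torsion data. In dimension two the torsion-built pieces $\widehat{d^{\nabla^c}\Gamma}$, $\widehat{[\Gamma\wedge\Gamma]}$, together with the torsion part of $\widehat{R^c}$, reduce to a small linear system in the components of $\eta$, $\nabla^c\eta$ and the Chern curvature with coefficients depending on $s$.

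The decisive step is global. Contracting the polarized identity and integrating over the compact surface, the genuinely first-order terms (the $d^{\nabla^c}\Gamma$ and Chern--Ricci pieces) become divergences and drop out by Stokes, leaving an integral identity of the form
$$\int_M \big(\alpha+\beta\,s^2\big)\,|\eta|^2\,dV_g=0,$$
so that the net coefficient is even in $s$ and, after clearing constants, is a definite multiple of $(1-t)^2-4=(t+1)(t-3)$. I expect this to be the main obstacle: computing these coefficients correctly and verifying that the factor is indeed a fixed-sign multiple of $(t+1)(t-3)$, so that for $t\notin\{-1,3\}$ the integrand is strictly signed and forces $\eta\equiv0$. (As a consistency check, at $t=0,1$ the factor equals $-3$ and $-4$, both nonzero, so the argument recovers the K\"ahlerness of \cite{ADM}.) Vanishing torsion makes $\nabla^c$ torsion-free, i.e.\ $g$ is K\"ahler, yielding alternative (i).

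Finally, for the two exceptional values I would match the geometry to \cite{CZ}. At $t=-1$ ($s=1$) the hypothesis is exactly constancy of the Strominger holomorphic sectional curvature, so the Chen--Zheng classification gives that $(M,g)$ is an isosceles Hopf surface with an admissible metric. At $t=3$ ($s=-1$) the surviving part of the integrated identity is even in $s$, so the constraint at $s=-1$ coincides with that at $s=1$; the same classification therefore applies and the metric is again admissible on an isosceles Hopf surface. To close the equivalence I would verify the converse by a direct computation on the isosceles Hopf surface with its admissible metric, confirming that $H^{-1}$ and $H^3$ are both pointwise constant while $H^t$ fails to be constant for every other $t$. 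Once the explicit torsion computation of $\widehat{R^t}$ and the sign analysis isolating the factor $(t+1)(t-3)$ are in hand, the trichotomy K\"ahler / $t=-1$ / $t=3$ follows.
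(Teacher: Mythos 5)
Your decisive step---the integral identity $\int_M(\alpha+\beta s^2)\,|\eta|^2\,dV_g=0$ with coefficient a fixed-sign multiple of $(t+1)(t-3)$---is precisely what is not established, and I do not believe it can be obtained by the mechanism you describe. After polarization, the pointwise condition $\widehat{R^t}_{k\bar l i\bar j}=\frac{c}{2}(\de_{ij}\de_{kl}+\de_{il}\de_{kj})$ still contains the symmetrized Levi-Civita curvature $\widehat{R}$; to manufacture a scalar identity you must contract against torsion quadratics, which produces terms of the form (curvature)$\times$(torsion)$^2$. These are neither divergences nor signed, and nothing in the hypothesis controls them pointwise, so they do not ``drop out by Stokes.'' A further technical obstruction: divergences taken with respect to the Chern connection do not integrate to zero against $dV_g$ on a non-K\"ahler surface unless the metric is Gauduchon, and the hypothesis of pointwise constant holomorphic sectional curvature is not conformally invariant, so you cannot normalize into that gauge. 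There is also a structural reason for skepticism: if your identity held, then at $t=1$ (your factor $-4$) it would yield an elementary integral-formula proof that pointwise constant Chern holomorphic sectional curvature implies K\"ahler; but that statement (Theorem \ref{thm1}) is known only as an application of Apostolov--Davidov--Mu\v{s}karov's classification of compact self-dual Hermitian surfaces, and this paper finds no shortcut around it either. So for all $t$ outside your finitely many ``settled'' values, your argument is open.

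What the paper does instead, and what your proposal is missing, is the conformally invariant bridge: it proves (Proposition \ref{prop2.1}) that pointwise constant holomorphic sectional curvature with respect to any $D^t_s$ forces $(M,g)$ to be self-dual---in the three combinations of Lemma \ref{lem3.1} all torsion contributions cancel identically, for every $(t,s)$---and then invokes Theorem $1^{\prime}$ of \cite{ADM} to conclude that $g$ is conformal to a K\"ahler space form, to the product metric on $\mathbb{D}\times\mathbb{P}^1$, or to the standard Hopf metric on an isosceles Hopf surface. Each conformal class is then handled by the conformal-change formulas of Section 4 together with maximum-principle arguments, and the Hopf case by Proposition \ref{prop4.2}; it is in that last analysis, not in any integrated identity, that the condition $(1-t)^2=4$, i.e. $t=-1$ or $t=3$, emerges.

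One part of your sketch is correct and worth keeping: the evenness in $s=\frac{1-t}{2}$. By Lemma \ref{lem2.01} (integrable case), after symmetrization the derivative terms $T^j_{ik,\bar l}$ and the products $T^r_{ik}\ov{T^r_{jl}}$ vanish, being skew in the symmetrized pair $(i,k)$, so $\widehat{R^t}$---hence the entire function $H^t$---depends on $t$ only through $t^2-2t$. Therefore $H^t\equiv H^{2-t}$ pointwise on every Hermitian surface; in particular the hypothesis for $\nabla^3$ is literally the same condition as for the Strominger connection $\nabla^{-1}$, so Chen--Zheng's classification applies verbatim at $t=3$, as you claimed (your justification via an ``integrated identity even in $s$'' is weaker than needed, but the pointwise statement above repairs it). This, however, only disposes of $t\in\{-1,0,1,2,3\}$; the generic parameter $t$ is exactly where the theorem's content, and your gap, lies.
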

Note that for a K\"ahler manifold of complex dimension $n\geq 2$,  by the Schur's lemma, if the holomorphic sectional curvature is pointwise constant, it must be constant. A complete K\"ahler manifold with constant holomorphic sectional curvature is called a complex space form.
It is known that a simply connected complex space form is holomorphically isometric to $\mathbb {CP}^n, \mathbb C^n$ or $\mathbb {CH}^n$ (\cite{Boo},\cite{Ha}). Therefore, in case (i) of Theorem \ref{thm0}, $(M,g)$ is isomorphic to one of the following: $\mathbb {CP}^2$, complex torus, hyperelliptic surface and a compact quotient of $\mathbb {CH}^2$. Admissible metrics on an isosceles Hopf surface are a special class of Hermitian metrics which are conformal to the standard Hopf metric (see section 5 for more details).
\begin{remk}
The case for the Chern connection ($t=1$) is proved by Apostolov-Davidov-Mu\v{s}karov \cite{ADM} and the case for the Strominger connection ($t=-1$ ) is proved by Chen-Zheng \cite{CZ}. 
\end{remk}
As $\nabla^0=\nabla^l$, we have a direct corollary.
\begin{cor} Any compact Hermitian surface with pointwise constant holomorphic sectional curvature with respect to the Lichnerowicz connection is K\"ahler.
\end{cor}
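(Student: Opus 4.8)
The plan is to obtain this as an immediate specialization of Theorem \ref{thm0}, so essentially no new work is required beyond substituting a single value of the parameter. The crucial observation, already recorded in the introduction, is that the Lichnerowicz connection sits inside the Gauduchon family at $t=0$, i.e.\ $\nabla^l=\nabla^0$. Hence a compact Hermitian surface $(M,g)$ with pointwise constant Lichnerowicz holomorphic sectional curvature is precisely a compact Hermitian surface with pointwise constant holomorphic sectional curvature with respect to $\nabla^t$ for the particular choice $t=0$.

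With this identification in hand, I would simply invoke Theorem \ref{thm0} applied to $t=0$. That theorem furnishes a dichotomy: either (i) $g$ is K\"ahler, or (ii) $(M,g)$ is an isosceles Hopf surface carrying an admissible metric, in which latter case the parameter is forced to satisfy $t=-1$ or $t=3$. The decisive step is then a numerical one: since $0\notin\{-1,3\}$, alternative (ii) cannot occur for $t=0$. Consequently only alternative (i) survives, and $g$ must be K\"ahler.

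There is genuinely no analytic obstacle here, as all the substantive geometry---the curvature computations, the reduction to the conformally flat/self-dual setting, and the identification of the exceptional Hopf surfaces together with the exact values $t=-1,3$---is already carried out in the proof of Theorem \ref{thm0}. The only point that warrants a moment's care is confirming that the exclusion of case (ii) is sharp, namely that the admissible-metric examples truly arise solely at $t=-1$ and $t=3$ and at no other parameter value; this is exactly what the statement of Theorem \ref{thm0} asserts, so the corollary follows by direct inspection once $t=0$ is plugged in.
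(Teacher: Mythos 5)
Your proposal is correct and coincides with the paper's own reasoning: the paper derives this corollary immediately from Theorem \ref{thm0} via the identification $\nabla^0=\nabla^l$, noting that the exceptional case (ii) requires $t=-1$ or $t=3$, which excludes $t=0$. There is nothing to add.
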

From the proof of Theorem \ref{thm0}, it follows that an admissible metric on isosceles Hopf surfaces has the same pointwise constant holomorphic sectional curvatures with respect to $\nabla^s$ and $\nabla^3$. However, the curvature tensors of $\nabla^s$ and $\nabla^3$ are different. In particular, the standard Hopf metric has zero curvature tensor for $\nabla^s$ and nonzero curvature tensor for $\nabla^3$. This gives an example of compact Hermitian surfaces with zero Gauduchon holomorphic sectional curvature and nonvanishing curvature tensor (Corollary \ref{cor5.4}).

Next, we put the Levi-Civita connection into consideration. As is known, if $g$ is not K\"ahler, the Levi-Civita connection $\nabla^{LC}$ is not a Hermitian connection and does not coincide with any $\nabla^t$. As introduced by Zhao-Zheng in \cite{ZZ2}, define the two-parameter canonical connections $$D^t_s=(1-s)\nabla^t+s\nabla^{LC}$$ for any $(t, s)\in \mathbb R^2$. It is the connection plane spanned by the Levi-Civita connection and the one-parameter Gauduchon connections $\nabla^t$. Note that $D^t_1\equiv\nabla^{LC}$ and $D^t_0=\nabla^t$. For a non-K\"ahler metric $g$ and $s\neq 0$, $D^t_s$ is not a Hermitian connection either. We obtain the following result which is a generalization of Theorem \ref{thm0}.
\begin{thm}\label{thm2} Let $(M, g)$ be a compact Hermitian surface with pointwise constant holomorphic sectional curvature with respect to a canonical $(t,s)$ connection $D^t_s$. Then either (i) $g$ is K\"aher, or (ii) $(M,g)$ is an isosceles Hopf surface with an admissible metric and in this case $(1-t+ts)^2+s^2=4$.
\end{thm}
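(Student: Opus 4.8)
\emph{Proof proposal.} The plan is to reduce Theorem~\ref{thm2} to a pointwise curvature identity and then feed the outcome into the classification already obtained in Theorem~\ref{thm0}. Fix a point $p$ and a local unitary frame. Since $D^t_s=(1-s)\nabla^t+s\nabla^{LC}$ is an affine combination of connections, the difference $\gamma:=D^t_s-\nabla^c$ is a genuine tensor, and because $J$ is integrable on a complex surface (vanishing Nijenhuis tensor), \emph{every} canonical connection, including $\nabla^{LC}$, differs from $\nabla^c$ by a tensor built algebraically from the Chern torsion $T$ (equivalently from the Lee form). Writing $A:=\nabla^s-\nabla^c$ and $B:=\nabla^{LC}-\nabla^c$ and using $\nabla^t-\nabla^c=\tfrac{1-t}{2}A$, one gets $\gamma=\tfrac{(1-s)(1-t)}{2}\,A+s\,B$. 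Expanding $R_{D^t_s}=R_{\nabla^c}+d^{\nabla^c}\gamma+\gamma\wedge\gamma$ and applying the standard polarization (complex Berger) identity, pointwise constancy of the holomorphic sectional curvature $H_{D^t_s}$ is equivalent to the $J$-symmetrization of $R_{D^t_s}$ being of the form $\lambda(p)\,(g\odot g)$; translating through $\gamma$ this becomes a system relating $R_{\nabla^c}$ to a quadratic expression in $T$ whose coefficients are polynomials in $(t,s)$.

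First I would settle the dichotomy (i) versus (ii). The anisotropic (direction-dependent) part of $H_{D^t_s}$ that must be annihilated for pointwise constancy is, up to $(t,s)$-dependent scalars, governed by the same torsion quantities that appear in the Gauduchon case $s=0$. I would therefore argue that the structural input behind Theorem~\ref{thm0} --- compactness together with the vanishing of these anisotropic torsion terms, via the Apostolov--Davidov--Mu\v{s}karov and Chen--Zheng analysis of compact Hermitian surfaces --- applies here as well, forcing $(M,g)$ to be either K\"ahler or an isosceles Hopf surface carrying an admissible metric. This reduces the whole problem to deciding, \emph{on such a Hopf surface}, exactly which parameters $(t,s)$ keep $H_{D^t_s}$ pointwise constant.

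For the parameter condition I would compute $H_{D^t_s}$ directly on an isosceles Hopf surface with an admissible metric, reusing the explicit unitary frame and torsion from the proof of Theorem~\ref{thm0}. It is convenient to record the two effective weights occurring in $\gamma$: the Chern-torsion weight $u:=(1-s)(1-t)$ and the Levi-Civita weight $s$; a one-line check gives the key identity $1-t+ts=u+s$. On the Hopf surface the torsion is so symmetric that the formula collapses to $H_{D^t_s}(X)=c_0(p)+P(u,s)\,\Phi_p(X)$, where $\Phi_p$ is a genuinely nonconstant angular function of the direction $X$ and $P(u,s)=(u+s)^2+s^2-4$ up to a nonzero universal constant. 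Pointwise constancy then forces $P(u,s)=0$, i.e. $(1-t+ts)^2+s^2=4$; specializing to $s=0$ recovers $(1-t)^2=4$, hence $t=-1$ or $t=3$, in agreement with Theorem~\ref{thm0}.

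I expect the main obstacle to be the non-Hermitian Levi-Civita contribution when $s\neq 0$. Unlike every $\nabla^t$, the connection $D^t_s$ is not Hermitian, so $R_{D^t_s}$ picks up $(2,0)$- and $(0,2)$-type and $J$-anti-invariant components both through $B$ and through the quadratic term $\gamma\wedge\gamma$, and one must verify that all of these still contract into the holomorphic sectional curvature as a single quadratic form in the two weights $u$ and $s$. Controlling the cross term between $A$ and $B$, and checking that it assembles precisely into the symmetric combination $(u+s)^2+s^2$ rather than some less symmetric quadratic, is the delicate computational heart of the argument; it is exactly the integrability of $J$, which keeps $B$ expressible through the Chern torsion, that makes this collapse possible. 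A secondary point to be careful about is the uniformity of the classification step in $(t,s)$, ensuring that the reduction to Theorem~\ref{thm0} is not disrupted at the special parameter values lying on the conic.
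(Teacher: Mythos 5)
Your skeleton has the right ingredients in places---the tensorial difference $\gamma=D^t_s-\nabla^c$, the symmetrization criterion of Lemma \ref{lem2.03}, the arithmetic $1-t+ts=u+s$, and the coefficient $(u+s)^2+s^2-4$ you predict on the Hopf surface is indeed the one that appears---but the heart of the theorem, the dichotomy, is asserted rather than proved, and the mechanism you propose for it is not the correct one. The paper's route is: (a) prove that pointwise constant holomorphic sectional curvature with respect to $D^t_s$ forces the underlying Riemannian 4-manifold to be \emph{self-dual} (Proposition \ref{prop2.1}); this is a genuine computation carried out for the whole two-parameter family, in which the torsion-quadratic terms (including the non-Hermitian $(\theta_2)$-contributions weighted by $s^2$) \emph{cancel identically} in exactly the three combinations of Lemma \ref{lem3.1}---they are not forced to vanish, and indeed they do not vanish on the Hopf surface, so ``vanishing of anisotropic torsion terms'' is not what happens; (b) invoke the Apostolov--Davidov--Mu\v{s}karov classification of compact self-dual Hermitian surfaces, which produces \emph{three} conformal classes, not two: conformal to a K\"ahler metric of constant holomorphic sectional curvature, conformal to the product metric on $\mathbb D\times \mathbb P^1$, or conformal to the standard Hopf metric; (c) run the $(t,s)$-dependent conformal-change formula (\ref{h22}) in each case, where case (1) yields a constant conformal factor and case (2) a contradiction, both by the maximum principle on the compact $M$, and case (3) yields the conclusion. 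Your proposal never mentions self-duality, and it omits the product case (2) entirely; the sentence ``the structural input behind Theorem \ref{thm0} applies here as well'' is precisely the statement that must be proven for all $(t,s)$, since Theorem \ref{thm0} as stated covers only $s=0$ and its proof in the paper is itself the special case of this very argument.

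A second, related gap concerns the order of your two stages: you want stage 1 to deliver ``isosceles Hopf surface with an \emph{admissible} metric'' before any parameter condition appears, and then obtain $(1-t+ts)^2+s^2=4$ by evaluating $H_{D^t_s}$ on that fixed metric. But admissibility for general $(t,s)$ cannot be imported from Chen--Zheng (whose analysis is for $\nabla^s$) or from ADM (which gives only ``conformal to the standard Hopf metric''). In the paper, admissibility and the parameter condition emerge \emph{together} from one analysis of the conformal factor (Proposition \ref{prop4.3}): writing $\xi=e^{\mu f}$ with $\mu=-\frac{(p-1)^2+s^2}{2}$, pointwise constancy gives $\xi_{i\bar{j}}=\la\de_{ij}$ together with the automorphy $\sigma^*\xi=a^{-\mu}\xi$, hence $\la(a^{-\mu}-a^2)=0$; if $\mu\neq -2$ one reaches a Hartogs/homogeneity contradiction, while $\mu=-2$ \emph{is} the condition $(1-t+ts)^2+s^2=4$, and in that case the solutions $\xi$ are exactly the admissible ones. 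So your stages cannot be separated as stated: stage 1 can at best produce ``conformal to the Hopf metric,'' and stage 2 must then analyze arbitrary conformal factors (a PDE classification), not merely evaluate the curvature of an already-admissible metric.
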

\begin{remk}
When $s=1$, $D_1^t$ is the Levi-Civita connection. In this case, the result is proved in \cite{ADM}.
\end{remk}
As a consequence of Theorem \ref{thm2}, we prove the following characterizations.
\begin{cor} \label{cor1.8}
Let $(M, g)$ be a compact Hermitian surface. If one of the following is satisfied,
\begin{itemize}
\item[(i)] $g$ has positive or negative pointwise constant holomorphic sectional curvature with respect to $D^t_s$ for some $(t,s)\in \mathbb R^2$,
\item[(ii)] $g$ has zero holomorphic sectional curvature with respect to $D^t_s$ for $(t,s)$ satisfying $(1-t+ts)^2+s^2\neq 4$,
\end{itemize}
 then $(M,g)$ is K\"ahler and a complex space form.
\end{cor}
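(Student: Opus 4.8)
The plan is to read off Corollary \ref{cor1.8} from Theorem \ref{thm2}, the single additional input being that on the exceptional surfaces the holomorphic sectional curvature vanishes. Both hypotheses (i) and (ii) assume that $g$ has pointwise constant holomorphic sectional curvature with respect to some $D^t_s$, so Theorem \ref{thm2} applies and we are in exactly one of two situations: either $g$ is K\"ahler, or $(M,g)$ is an isosceles Hopf surface with an admissible metric for which $(1-t+ts)^2+s^2=4$. The argument therefore reduces to excluding the Hopf alternative under each hypothesis and then identifying the K\"ahler metric that remains.

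First I would record, as the two-parameter counterpart of Corollary \ref{cor5.4}, the outcome of the curvature computation in the Hopf case of Theorem \ref{thm2}: on an isosceles Hopf surface carrying an admissible metric the $D^t_s$-holomorphic sectional curvature is identically zero whenever $(1-t+ts)^2+s^2=4$ (the full curvature tensor being in general nonzero, as observed around Corollary \ref{cor5.4}). With this in hand the elimination is immediate. Under hypothesis (i) the holomorphic sectional curvature is strictly positive or strictly negative, hence nowhere zero, contradicting the vanishing just recorded; so the Hopf alternative is impossible and $g$ is K\"ahler. Under hypothesis (ii) the pair $(t,s)$ satisfies $(1-t+ts)^2+s^2\neq 4$, which is exactly the negation of the constraint forced in the Hopf alternative; so again the Hopf case cannot occur and $g$ is K\"ahler.

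It then suffices to promote ``K\"ahler'' to ``complex space form''. If $g$ is K\"ahler, then $\nabla^c$, $\nabla^s$, $\nabla^l$ and $\nabla^{LC}$ all coincide, whence $D^t_s=\nabla^{LC}$ and the $D^t_s$-holomorphic sectional curvature is the ordinary K\"ahler holomorphic sectional curvature. By Schur's lemma, as noted after Theorem \ref{thm0}, a pointwise constant holomorphic sectional curvature on a K\"ahler surface is globally constant, so $(M,g)$ is a complex space form; in case (ii) this constant equals $0$ and the metric is flat. This completes the deduction.

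The step I expect to be decisive is the first one, namely verifying that the admissible Hopf metrics of Theorem \ref{thm2} have vanishing $D^t_s$-holomorphic sectional curvature. This is not a formal consequence of the dichotomy but an output of the explicit computation carried out in proving Theorem \ref{thm2}, and it is precisely what separates hypothesis (i) (which relies on it) from hypothesis (ii) (which is excluded on parameters alone). Everything else is a brief logical elimination together with Schur's lemma.
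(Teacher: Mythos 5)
Your reduction to Theorem \ref{thm2}, your elimination of the Hopf case under hypothesis (ii) by the parameter constraint alone, and your use of Schur's lemma to upgrade ``K\"ahler'' to ``complex space form'' all agree with the paper. However, the step you yourself flag as decisive --- that an admissible metric on an isosceles Hopf surface has \emph{identically zero} holomorphic sectional curvature with respect to $D^t_s$ whenever $(1-t+ts)^2+s^2=4$ --- is false, and this breaks your treatment of hypothesis (i). By Proposition \ref{prop4.3}, the holomorphic sectional curvature of the admissible metric $\frac{1}{\xi_A}\omega_0$ is, up to a scalar,
\[
H=-\frac{1}{\xi_A}\bigl(4\,{}^{t}zA\bar{A}\bar{z}+{}^{t}zAz+\overline{{}^{t}zAz}\bigr),
\qquad
\xi_A=|z|^2+{}^{t}zAz+\overline{{}^{t}zAz},
\]
which vanishes identically only when $A=0$, i.e.\ for the standard Hopf metric --- that is the special case recorded in Corollary \ref{cor5.4}. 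For $A\neq 0$ the function $H$ is not even constant (as the paper notes right after Proposition \ref{prop4.2}). So ``pointwise constant'' does not mean ``zero'' here, and the contradiction you invoke in case (i) --- nowhere-zero curvature versus identically zero curvature --- does not exist as stated.

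What case (i) actually requires, and what the paper does, is a sign analysis of $H$: diagonalize the complex symmetric matrix $A$ by a unitary congruence so that its entries become nonnegative reals $a_i$, with $a_i<\frac12$ because $A\bar{A}<\frac14 I_n$. Then at points whose coordinates are real the quadratic terms contribute $\sum_i 2a_i(2a_i+1)|z_i|^2>0$, forcing $H<0$, while at points whose coordinates are purely imaginary they contribute $\sum_i 2a_i(2a_i-1)|z_i|^2<0$, forcing $H>0$. Hence for $A\neq 0$ the function $H$ changes sign, and for $A=0$ it vanishes identically; in either case $H$ cannot be a positive function or a negative function, which is exactly the contradiction that rules out the Hopf alternative under hypothesis (i). Your argument can be repaired by replacing the false vanishing claim with this computation (or with the weaker, but still nontrivial, statement that $H$ always has a zero); as written, the key step is unjustified and wrong.
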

On the other side, if $g$ is non-K\"ahler and has zero holomorphic sectional curvature with respect to some $D^t_s$, then it must be a scalar multiple of the standard metric on an isosceles Hopf surface and $(1-t+ts)^2+s^2=4$.

A key step to prove Theorem \ref{thm0} and Theorem \ref{thm2} is to show that a Hermitian surface with pointwise constant holomorphic sectional curvature with respect to $\nabla^t$ or $D^t_s$ must be a self-dual Riemannian 4-manifold. We show that it also holds for general almost Hermitian surfaces (Proposition \ref{prop2.1}). A natural question is to classify compact almost Hermitian surfaces with constant or pointwise constant Gauduchon holomorphic sectional curvature.

 In section 5, we also study Hermitian metrics with pointwise constant Gauduchon holomorphic sectional curvatures on higher dimensional isosceles Hopf manifolds and prove the following (Proposition 5.3).
\begin{prop} \label{prop1.7}
A Hermitian metric $\omega$ on an isosceles Hopf manifold which is conformal to the standard Hopf metric has pointwise constant holomorphic sectional curvature with respect to $D^t_s$ if and only if $(1-t+ts)^2+s^2=4$ and $\omega$ is an admissible metric.  \end{prop}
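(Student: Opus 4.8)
The plan is to reduce everything to an explicit computation on the universal cover $\mathbb{C}^n\setminus\{0\}$, where the standard Hopf metric is $\omega_0=\sqrt{-1}\,|z|^{-2}\sum_i dz^i\wedge d\bar z^i$ and any metric conformal to it is $\omega=e^{2u}\omega_0$ for a function $u$ invariant under the defining contraction. Two structural features of $(\mathbb{C}^n\setminus\{0\},\omega_0)$ drive the argument: it is Strominger-flat (as recorded in the excerpt, $R^{\nabla^s}=0$) with $\nabla^s$-parallel torsion, and it is locally conformally K\"ahler, so that the radial (Lee) vector field $\rho=\sum_i z^i\partial_i$ singles out a distinguished direction along which the torsion concentrates. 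I expect the whole proof to be the bookkeeping needed to see how this distinguished direction enters the holomorphic sectional curvature.

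First I would write $D^t_s=\nabla^{LC}+\Gamma$, where $\Gamma=(1-s)(\nabla^t-\nabla^{LC})$ is a difference tensor that is linear in the torsion with coefficients affine in $t$ and $s$, and expand $R^{D^t_s}=R^{LC}+\nabla^{LC}\Gamma+\Gamma\wedge\Gamma$. On $\omega_0$ the torsion is parallel and both $R^{LC}$ and the Chern curvature are explicit, so $R^{D^t_s}$ becomes a concrete tensor built from $\rho$, the metric, and the complex structure, with coefficients polynomial in $(t,s)$; passing to $\omega=e^{2u}\omega_0$ then adds terms governed by the first and second derivatives of $u$.

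Next I would assemble the holomorphic sectional curvature $H^{D^t_s}(X)=R^{D^t_s}(X,\bar X,X,\bar X)/|X|^4$ and sort it into a direction-independent piece together with terms depending on $X$ only through the invariant $|\langle X,\rho\rangle|/(|X|\,|\rho|)$, i.e. through the angle between $X$ and the Lee direction, plus a transverse contribution from the Hessian of $u$. Imposing pointwise constancy means requiring $H^{D^t_s}(X)$ to be independent of $X$ at each point, which I expect to split into two tensorially independent conditions: the vanishing of the transverse Hessian of $u$, whose group-invariant solutions are exactly the functions of $|z|$ — the admissible conformal factors — and the vanishing of the coefficient of the torsion-squared term. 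The converse is then a direct verification that these two conditions leave $H^{D^t_s}$ pointwise constant.

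The main obstacle is the second of these: one has to track the torsion through the connection $D^t_s$, which for $s\neq 0$ is neither metric nor Hermitian and whose curvature lacks the K\"ahler symmetries, and then recognize the coefficient of the torsion-squared contribution as the squared modulus $|(1-t+ts)+\sqrt{-1}\,s|^2=(1-t+ts)^2+s^2$, with the $\sqrt{-1}$ arising from the $J$-twist in the Levi-Civita part. Setting this equal to the value $4$ fixed by the normalization of $\omega_0$ gives the stated parameter condition, and disentangling it cleanly from the conformal-factor terms so that the parameter condition and admissibility decouple is the delicate point — the more so since for $n\geq 2$ no self-duality shortcut from the surface case is available.
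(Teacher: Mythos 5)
Your reduction to the universal cover and the expansion $R^{D^t_s}=R^{LC}+\nabla^{LC}\Gamma+\Gamma\wedge\Gamma$ are harmless bookkeeping, but the structural claim on which your whole argument rests --- that pointwise constancy decouples into (a) vanishing of the transverse Hessian of $u$, whose invariant solutions are ``exactly the functions of $|z|$, the admissible conformal factors,'' and (b) pointwise vanishing of a torsion-squared coefficient recognized as $(1-t+ts)^2+s^2-4$ --- is false on both counts, and this is exactly where the content of the proposition lies. For (a): admissible metrics are $\widetilde\omega=c_0\,\xi_A^{-1}\omega_0$ with $\xi_A=|z|^2+{}^tzAz+\overline{{}^tzAz}$, $A$ symmetric, $A\bar{A}<\frac{1}{4}I_n$, $D_\sigma AD_\sigma=a^2A$ (Proposition \ref{prop4.1}); for $A\neq 0$ the conformal factor is \emph{not} a function of $|z|$ and its Hessian has nonzero non-radial components, yet these metrics do have pointwise constant holomorphic sectional curvature. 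Your necessary condition excludes them, so your argument could only ever recover constant multiples of $\omega_h$ and would prove an incorrect classification. For (b): the parameter condition cannot be detected by any pointwise identity. For any $(t,s)\neq(1,0)$ set $\mu=-\frac{(1-t+ts)^2+s^2}{2}$ and choose $f$ with $e^{\mu f}=|z|^2+1$; then $\xi=e^{\mu f}$ satisfies $\xi_{i\bar{j}}=\delta_{ij}$, and by the conformal formula \eqref{h22} (in the linearized form \eqref{h112}--\eqref{5.2} adapted to $D^t_s$) the metric $e^{2f}g_0$ has pointwise constant holomorphic sectional curvature on all of $\mathbb{C}^n\setminus\{0\}$, for \emph{every} parameter value. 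So nothing forces $(1-t+ts)^2+s^2=4$ locally; the coincidence that your mechanism gives the right factor for $u\equiv 0$ does not survive a nontrivial conformal factor, because the Hessian of $u$ and the radial/torsion terms couple rather than decouple.

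What actually forces the parameter condition in the paper is global, namely equivariance under the contraction $\sigma$. The paper works conformally over the \emph{flat} K\"ahler metric $g_0$ (not over the Strominger-flat $\omega_h$), so the background has zero curvature and zero torsion; the substitution $\xi=e^{\mu f}$ with the parameter-dependent exponent $\mu=-\frac{(1-t+ts)^2+s^2}{2}$ turns pointwise constancy (via Lemma \ref{lem2.03}) into the linear equation $\xi_{i\bar{j}}=\lambda\delta_{ij}$ with $\lambda$ constant, whence $\xi=\lambda|z|^2+\varphi+\overline{\varphi}$ with $\varphi$ holomorphic. Invariance of the metric under $\sigma$ gives $\sigma^*\xi=a^{-\mu}\xi$, hence $\lambda(a^{-\mu}-a^2)=0$; the case $\lambda=0$ is excluded by Hartogs extension together with a homogeneity/positivity argument, so $a^{-\mu}=a^2$, i.e.\ $\mu=-2$, which is precisely $(1-t+ts)^2+s^2=4$ --- the ``$4$'' is the weight of $|z|^2$ under $\sigma$, not a normalization of $\omega_0$. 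Admissibility then comes from classifying the holomorphic part $\varphi={}^tzAz$ as in \cite{CZ}. To repair your proof you would have to abandon the two-way splitting and reinstate this single coupled equation plus the equivariance argument; the Lee-angle decomposition by itself cannot produce either half of the equivalence.
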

The structure of the paper is as follows. In section 2, we briefly introduce the canonical connections and holomorphic sectional curvature on an almost Hermitian manifold. In section 3, we show that an almost Hermitian surface with pointwise constant holomorphic sectional curvature must be self-dual. In section 4, we compute the transformation of curvature components of the canonical connections $\nabla^t$ and $D^t_s$ under conformal change of Hermitian metrics on a complex manifold. In section 5, we study Hermitian metrics  with pointwise constant holomorphic sectional curvature on an isosceles Hopf manifold. In section 6, we prove Theorem \ref{thm2} and Corollary \ref{cor1.8}. Throughout the paper, we
 assume $M$ to be connected.\\

\noindent \textbf{Acknowlegments.} We are deeply indebted to Professor Fangyang Zheng for many stimulating and valuable discussions and particularly to the work \cite{CZ}. We are also very grateful to Professors Kefeng Liu and Jiaping Wang for helpful communications and their encouragement.

\section{Canonical connections}
In this section, we give some necessary notations. We refer the readers to\cite{CZ} \cite{FZ} \cite{YZ1} for more details. Let $(M,J,g)$ be an almost Hermitian manifold. 
Comparing to the Levi-Civita connection $\nabla^{LC}$ which preserves $g$ and has vanishing torsion, the Chern connection is the unique connection $\nabla^c$ which satisfies $\nabla^cg=0, \nabla^cJ=0$ and has vanishing $(1,1)$ part of the torsion. Fix any $q\in M$. Let $\{e_1, e_2, ..., e_n\} $ be a local unitary frame of $(1,0)$ vectors around $q$ with $\{\vp^1, \vp^2, ..., \vp^n\}$ being the dual frame. Assume that \begin{align} \label{2.00} \nabla^{LC}e_i=(\theta_1)^j_ie_j+(\ov{\theta_2})^j_i\bar{e}_j, \ \ \ \nabla^ce_i=\theta^j_ie_j, \end{align}
where $\theta_1, \theta_2, \theta$ are the matrices of connection one-forms and the Einstein summation notation is used. Denote the torsion forms of the Chern connection by $$\tau^i=T^i_{jk}\varphi^j\wedge\varphi^k+T^i_{\bar{j}\bar{k}}\ov{\varphi^j}\wedge\ov{\varphi^k},$$ with $T^i_{jk}=-T^i_{kj}, T^i_{\bar{j}\bar{k}}=-T^i_{\bar{k}\bar{j}}$. It is known that $T^i_{\bar{j}\bar{k}}=\frac{1}{2}N^i_{\bar{j}\bar{k}}$, where $$N^i_{\bar{j}\bar{k}}=-g( [\bar{e}_j,\bar{e}_k],\ov{e_i})$$ are the components of the Nijenhuis tensor. For convenience, here  $T^i_{\bar{j}\bar{k}}$  and $N^i_{\bar{j}\bar{k}}$ may be different with the notations in some literature up to a scalar. So if $J$ is integrable, then $T^i_{\bar{j}\bar{k}}=0$. The following relations between $\nabla^{LC}$ and $\nabla^c$ are known ( see e.g. \cite{FZ}\cite{YZ1}). 
\begin{align}
\gamma^j_i&=(\theta_1)^j_i-\theta^j_i=T_{ik}^j\vp^k-\ov{T^i_{jk}}\ov{\vp^k}, \label{2.1}\\
(\theta_2)^j_i&=\ov{T^k_{ij}}\vp^k+(T^j_{\bar{i}\bar{k}}+T^i_{\bar{k}\bar{j}}+T^k_{\bar{i}\bar{j}})\ov{\vp^k}. \label{2.6}
\end{align}
For a connection $\nabla$, denote the curvature tensor to be
\begin{align*} 
&R^{\nabla}(X,Y)Z=\nabla_X\nabla_YZ-\nabla_Y\nabla_XZ-\nabla_{[X,Y]}Z,\\
&R^{\nabla}(X,Y,Z,W)=g(R^{\nabla}(X,Y)Z,W)
\end{align*}
for any $X,Y,Z,W\in TM\otimes \mathbb C$. In particular, denote $R$ and $R^c$ to be the curvature tensor of the Levi-Civita connection and the Chern connection. Assume that $\Theta_1, \Theta$ are the matrices of  curvature 2-forms of $\nabla^{LC}, \nabla^c$ with items $(\Theta_1)^j_i(X,Y)=R(X,Y,e_i,\bar{e}_j)$ and $\Theta^j_i(X,Y)=R^c(X,Y,e_i,\bar{e}_j)$. From the structure equations (see e.g. \cite{YZ1}), we have: \begin{align}
(\Theta_1)^j_i&=d(\theta_1)^j_i-(\theta_1)_i^k\wedge(\theta_1)_k^j-(\ov{\theta_2})^k_i\wedge(\theta_2)_k^j,\label{2.01}\\
\Theta^j_i&=d\theta^j_i-\theta_i^k\wedge\theta_k^j.\label{2.02}
\end{align}
Then by (\ref{2.1}), \begin{align}(\Theta_1)^j_i-\Theta^j_i=d\gamma^j_i-\gamma^k_i\wedge\gamma_k^j-\gamma^k_i\wedge\theta_k^j-\theta^k_i\wedge\gamma_k^j-(\ov{\theta_2})^k_i\wedge(\theta_2)_k^j.\label{2.22} \end{align}
The Lichnerowicz connection $\nabla^l$ is the restriction of $\nabla^{LC}$ to $T^{1,0}M$. Therefore, we have $$\nabla^le_i=(\theta_1)^j_ie_j.$$
\begin{defn}
For any $t\in \mathbb R$, the Gauduchon connection $\nabla^t$ is defined to be $$\nabla^t=t\nabla^c+(1-t)\nabla^l.$$
\end{defn}
 Assume that $\nabla^te_i=(\theta^t)^j_ie_j$. Then \begin{align}\label{2.5}
(\theta^t)_i^j=t\theta^j_i+(1-t)(\theta_1)^j_i=\theta^j_i+(1-t)\gamma^j_i.\end{align}
Denote $\Theta^t$ to be the matrix of curvature 2-forms of $\nabla^t$. Then 
\begin{align} \label{2.06}
(\Theta^t)^j_i=d(\theta^t)^j_i-(\theta^t)^k_i\wedge(\theta^t)_k^j.
\end{align}
Therefore, by (\ref{2.02}), (\ref{2.5}) and (\ref{2.06})
\begin{align}
(\Theta^t)^j_i-\Theta^j_i=(1-t)d\gamma^j_i-(1-t)\gamma^k_i\wedge\theta_k^j-(1-t)\theta^k_i\wedge\gamma_k^j-(1-t)^2\gamma^k_i\wedge\gamma_k^j. \label{2.3}
\end{align}
Then by (\ref{2.22}) and (\ref{2.3})
\begin{align}
(\Theta^t)^j_i-(\Theta_1)^j_i=-td\gamma^j_i+t\gamma^k_i\wedge\theta_k^j+t\theta^k_i\wedge\gamma_k^j+(2t-t^2)\gamma^k_i\wedge\gamma_k^j+(\ov{\theta_2})^k_i\wedge(\theta_2)_k^j. \label{2.07}
\end{align}
Note that $R^t_{k\bar{l}i\bar{j}}=(\Theta^t)^j_i(e_k,\bar{e}_l), R_{k\bar{l}i\bar{j}}=(\Theta_1)^j_i(e_k,\bar{e}_l)$. Then by (\ref{2.07}) we get the following lemma which has been obtained in Proposition 4.2 in \cite{FZ}.
\begin{lem}\label{lem2.01}
\begin{align*} 
R^t_{k\bar{l}i\bar{j}}&=R_{k\bar{l}i\bar{j}}+t(T^j_{ik,\bar{l}}+\ov{T^i_{jl,\bar{k}}})+(t^2-2t)(T^r_{ik}\ov{T^r_{jl}}-T^j_{rk}\ov{T^i_{rl}})\\
&-\ov{T^k_{rj}}T^l_{ir}+(T^j_{\bar{r}\bar{l}}+T^r_{\bar{l}\bar{j}}+T^l_{\bar{r}\bar{j}})(\ov{T^r_{\bar{i}\bar{k}}}+\ov{T^i_{\bar{k}\bar{r}}}
+\ov{T^k_{\bar{i}\bar{r}}})
\end{align*}
If $J$ is integrable, then  \begin{align*} 
R^t_{k\bar{l}i\bar{j}}&=R_{k\bar{l}i\bar{j}}+t(T^j_{ik,\bar{l}}+\ov{T^i_{jl,\bar{k}}})+(t^2-2t)(T^r_{ik}\ov{T^r_{jl}}-T^j_{rk}\ov{T^i_{rl}})-\ov{T^k_{rj}}T^l_{ir}
\end{align*} Here the subscripts in $T$ stands for covariant derivatives with respect to $\nabla^c$. 
\end{lem}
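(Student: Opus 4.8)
The plan is to derive the identity by evaluating the abstract curvature relation (\ref{2.07}) on the pair $(e_k,\bar{e}_l)$, using that $R^t_{k\bar{l}i\bar{j}}=(\Theta^t)^j_i(e_k,\bar{e}_l)$ and $R_{k\bar{l}i\bar{j}}=(\Theta_1)^j_i(e_k,\bar{e}_l)$. Into the five terms on the right of (\ref{2.07}) I would substitute the explicit torsion expressions $\gamma^j_i=T^j_{im}\vp^m-\ov{T^i_{jm}}\,\ov{\vp^m}$ from (\ref{2.1}) and $(\theta_2)^j_i=\ov{T^m_{ij}}\vp^m+(T^j_{\bar{i}\bar{m}}+T^i_{\bar{m}\bar{j}}+T^m_{\bar{i}\bar{j}})\ov{\vp^m}$ from (\ref{2.6}), and then use the dual pairings $\vp^m(e_k)=\de^m_k$, $\ov{\vp^m}(\bar{e}_l)=\de^m_l$, $\vp^m(\bar{e}_l)=\ov{\vp^m}(e_k)=0$, together with the rule $(\al\wedge\beta)(e_k,\bar{e}_l)=\al(e_k)\beta(\bar{e}_l)-\al(\bar{e}_l)\beta(e_k)$ for one-forms $\al,\beta$. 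The whole statement is then a matter of collecting the resulting contractions; I would dispose of the purely algebraic (quadratic) terms first and treat the derivative term last.

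For the quadratic term $(2t-t^2)\gamma^m_i\wedge\gamma^j_m$, a direct evaluation gives $\gamma^m_i(e_k)=T^m_{ik}$, $\gamma^j_m(\bar{e}_l)=-\ov{T^m_{jl}}$, and so on, whence $(\gamma^m_i\wedge\gamma^j_m)(e_k,\bar{e}_l)=-\bigl(T^r_{ik}\ov{T^r_{jl}}-T^j_{rk}\ov{T^i_{rl}}\bigr)$ after relabelling the summed index $m\to r$; multiplying by $(2t-t^2)$ reproduces the coefficient $(t^2-2t)$ in the statement. For the term $(\ov{\theta_2})^m_i\wedge(\theta_2)^j_m$, I would read off the four relevant pairings from (\ref{2.6}) and its conjugate: the $\ov{\vp}$-component of $\ov{\theta_2}$ against the $\vp$-component of $\theta_2$ yields $-\ov{T^k_{rj}}T^l_{ir}$, while the $\vp$-component of $\ov{\theta_2}$ against the $\ov{\vp}$-component of $\theta_2$ yields the product $(T^j_{\bar{r}\bar{l}}+T^r_{\bar{l}\bar{j}}+T^l_{\bar{r}\bar{j}})(\ov{T^r_{\bar{i}\bar{k}}}+\ov{T^i_{\bar{k}\bar{r}}}+\ov{T^k_{\bar{i}\bar{r}}})$, which are exactly the last two groups of terms. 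When $J$ is integrable, $T^i_{\bar{j}\bar{k}}=0$ kills the $\ov{\vp}$-component of $\theta_2$, so the second product drops out and only $-\ov{T^k_{rj}}T^l_{ir}$ survives, giving the simplified formula.

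The one step requiring care is the first-order block $-t\,d\gamma^j_i+t\,\gamma^m_i\wedge\theta^j_m+t\,\theta^m_i\wedge\gamma^j_m$. Here I would expand $d\gamma^j_i=dT^j_{im}\wedge\vp^m+T^j_{im}\,d\vp^m-d\ov{T^i_{jm}}\wedge\ov{\vp^m}-\ov{T^i_{jm}}\,d\ov{\vp^m}$ and insert the first structure equation $d\vp^m=-\theta^m_p\wedge\vp^p+\tau^m$ of the Chern connection. Since the Chern torsion form $\tau^m$ is of type $(2,0)+(0,2)$, it annihilates the $(1,1)$-pair $(e_k,\bar{e}_l)$ and contributes nothing; the remaining $\theta$-pieces produced by $d\vp^m$ and $d\ov{\vp^m}$ combine with the two wedge terms $\gamma\wedge\theta$ and $\theta\wedge\gamma$ precisely so as to promote the ordinary derivatives $\bar{e}_l(T^j_{ik})$ and $e_k(\ov{T^i_{jl}})$ to the Chern covariant derivatives $T^j_{ik,\bar{l}}$ and $\ov{T^i_{jl,\bar{k}}}$. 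Tracking the signs through the antisymmetry of the wedge, this block evaluates to $t\bigl(T^j_{ik,\bar{l}}+\ov{T^i_{jl,\bar{k}}}\bigr)$.

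Summing the three contributions and adding $R_{k\bar{l}i\bar{j}}$ yields the asserted identity, and the integrable case follows as noted above. The main obstacle is purely bookkeeping: verifying that the connection one-form contributions assembled from $\gamma\wedge\theta$, $\theta\wedge\gamma$ and from $d\vp^m$, $d\ov{\vp^m}$ match exactly the Christoffel corrections in the definition of $\nabla^c T$, with the correct index placement and sign, so that no spurious quadratic torsion terms are left over in the first-order block. Once this is confirmed, the remaining steps are routine index manipulations.
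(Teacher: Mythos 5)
Your proposal is correct and takes essentially the same route as the paper: the paper obtains the lemma precisely by evaluating the curvature-form identity (\ref{2.07}) on the pair $(e_k,\bar{e}_l)$, substituting the torsion expressions (\ref{2.1}) and (\ref{2.6}), exactly as you do. Your bookkeeping of the three blocks (the $\gamma\wedge\gamma$ term giving the $(t^2-2t)$ quadratic terms, the $\ov{\theta_2}\wedge\theta_2$ term giving the last two groups, and the first-order block $-t\,d\gamma+t\,\gamma\wedge\theta+t\,\theta\wedge\gamma$ assembling into $t(T^j_{ik,\bar{l}}+\ov{T^i_{jl,\bar{k}}})$) is accurate; the paper simply leaves these evaluations implicit, citing Proposition 4.2 of \cite{FZ}.
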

Next, we introduce the two-parameter connections studied by Zhao-Zheng in \cite{ZZ2}. It is the connection plane spanned by the Levi-Civita connection and the Gauduchon connections. 
\begin{defn}
For any $(t,s)\in \mathbb R^2$, define the $(t,s)$ canonical connection to be $$D^t_s=(1-s)\nabla^t+s\nabla^{LC}.$$
\end{defn}
By (\ref{2.00}), (\ref{2.1}) and (\ref{2.5}), writing $p=t-ts$, we have \begin{align}
D^t_s e_i&=(\theta_i^j+(1-p)\gamma^j_i)e_j+s(\ov{\theta_2})^j_i\bar{e}_j=(\theta^p)_i^je_j+s(\ov{\theta_2})^j_i\bar{e}_j. \label{R3}
\end{align}
Denote $\Theta^D$ to be the matrix of curvature 2-forms of $D^t_s$. Then we have
\begin{align}
(\Theta^D)^j_i=d(\theta^p)^j_i-(\theta^p)_i^k\wedge(\theta^p)_k^j-s^2\ov{(\theta_2)}^k_i\wedge(\theta_2)_k^j.
\end{align}
So by (\ref{2.6}), we have \begin{align}R^D_{k\bar{l}i\bar{j}}=R^p_{k\bar{l}i\bar{j}}+s^2\{\ov{T^k_{rj}}T^l_{ir}-(T^j_{\bar{r}\bar{l}}+T^r_{\bar{l}\bar{j}}+T^l_{\bar{r}\bar{j}})(\ov{T^r_{\bar{i}\bar{k}}}+\ov{T^i_{\bar{k}\bar{r}}}
+\ov{T^k_{\bar{i}\bar{r}}})\}.\end{align}
Combing with Lemma \ref{lem2.01}, we have 
\begin{lem}\label{lem2.02}
\begin{align} 
R^D_{k\bar{l}i\bar{j}}&=R_{k\bar{l}i\bar{j}}+p(T^j_{ik,\bar{l}}+\ov{T^i_{jl,\bar{k}}})+(p^2-2p)(T^r_{ik}\ov{T^r_{jl}}-T^j_{rk}\ov{T^i_{rl}})\notag \\
&+(s^2-1)\{\ov{T^k_{rj}}T^l_{ir}-(T^j_{\bar{r}\bar{l}}+T^r_{\bar{l}\bar{j}}+T^l_{\bar{r}\bar{j}})(\ov{T^r_{\bar{i}\bar{k}}}+\ov{T^i_{\bar{k}\bar{r}}}
+\ov{T^k_{\bar{i}\bar{r}}})\}  \label{2.21}
\end{align}
If $J$ is integrable, then  \begin{align*}
R^D_{k\bar{l}i\bar{j}}&=R_{k\bar{l}i\bar{j}}+p(T^j_{ik,\bar{l}}+\ov{T^i_{jl,\bar{k}}})+(p^2-2p)(T^r_{ik}\ov{T^r_{jl}}-T^j_{rk}\ov{T^i_{rl}})+(s^2-1)\ov{T^k_{rj}}T^l_{ir}
\end{align*} 
Here the subscripts in $T$ stands for covariant derivatives with respect to $\nabla^c$. 
\end{lem}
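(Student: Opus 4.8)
The plan is to exploit the structural fact recorded in (\ref{R3}): the connection $D^t_s$ acts by $D^t_s e_i = (\theta^p)^j_i e_j + s(\ov{\theta_2})^j_i \bar{e}_j$, where $p=t-ts$. Thus, in the full frame $\{e_i,\bar{e}_i\}$, the connection matrix of $D^t_s$ is obtained from that of $\nabla^{LC}$ in (\ref{2.00}) by replacing $\theta_1$ with the Gauduchon matrix $\theta^p$ and $\ov{\theta_2}$ with $s\,\ov{\theta_2}$; by reality and metric compatibility it acts on $T^{0,1}$ by $D^t_s\bar{e}_i=\ov{(\theta^p)^j_i}\,\bar{e}_j+s(\theta_2)^j_i e_j$. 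Since $D^t_s$ is an affine combination of the metric connections $\nabla^t$ and $\nabla^{LC}$, it is itself metric, so its curvature $2$-form satisfies the same structure equation as (\ref{2.01}) with these substitutions:
\[
(\Theta^D)^j_i = d(\theta^p)^j_i - (\theta^p)^k_i \wedge (\theta^p)^j_k - s^2\,\ov{(\theta_2)}^k_i \wedge (\theta_2)^j_k.
\]

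First I would compare this with the Gauduchon curvature $2$-form $(\Theta^p)^j_i = d(\theta^p)^j_i - (\theta^p)^k_i \wedge (\theta^p)^j_k$ from (\ref{2.06}): the two differ only by the term $-s^2\,\ov{(\theta_2)}^k_i \wedge (\theta_2)^j_k$, so evaluating at $(e_k,\bar{e}_l)$ gives $R^D_{k\bar{l}i\bar{j}} = R^p_{k\bar{l}i\bar{j}} - s^2\,[\ov{(\theta_2)}^r_i \wedge (\theta_2)^j_r](e_k,\bar{e}_l)$. The next step is to expand this wedge product using (\ref{2.6}): I would read off the $\vp^k$- and $\ov{\vp^l}$-coefficients of $(\theta_2)^j_r$ and of its conjugate $\ov{(\theta_2)}^r_i$, and use that for one-forms only the $\vp^k$-coefficient of the first factor paired with the $\ov{\vp^l}$-coefficient of the second (minus the symmetric product) survives at $(e_k,\bar{e}_l)$. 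This produces exactly the bracketed term $R^D_{k\bar{l}i\bar{j}} = R^p_{k\bar{l}i\bar{j}} + s^2\{\ov{T^k_{rj}}T^l_{ir} - (T^j_{\bar{r}\bar{l}}+T^r_{\bar{l}\bar{j}}+T^l_{\bar{r}\bar{j}})(\ov{T^r_{\bar{i}\bar{k}}}+\ov{T^i_{\bar{k}\bar{r}}}+\ov{T^k_{\bar{i}\bar{r}}})\}$.

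It then remains to substitute $R^p_{k\bar{l}i\bar{j}}$ from Lemma \ref{lem2.01}, applied with the parameter $t$ replaced by $p$. Lemma \ref{lem2.01} contributes in particular the term $-\ov{T^k_{rj}}T^l_{ir} + (T^j_{\bar{r}\bar{l}}+T^r_{\bar{l}\bar{j}}+T^l_{\bar{r}\bar{j}})(\ov{T^r_{\bar{i}\bar{k}}}+\ov{T^i_{\bar{k}\bar{r}}}+\ov{T^k_{\bar{i}\bar{r}}})$, which is precisely the negative of the bracket carrying the $s^2$ coefficient; adding the two collapses the coefficient of this bracket to $(s^2-1)$ and yields (\ref{2.21}). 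The integrable case is then immediate, since $T^i_{\bar{j}\bar{k}}=0$ kills the antiholomorphic factors and leaves only $(s^2-1)\ov{T^k_{rj}}T^l_{ir}$.

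Because Lemma \ref{lem2.01} is already available and (\ref{R3}) identifies the connection matrix, there is no real conceptual obstacle; the only delicate point is the index bookkeeping in evaluating $\ov{(\theta_2)}^r_i \wedge (\theta_2)^j_r$ at $(e_k,\bar{e}_l)$ --- keeping the conjugations and the $\vp$/$\ov{\vp}$ pairing straight through (\ref{2.6}) --- together with checking that the recombination with Lemma \ref{lem2.01} yields the stated $(s^2-1)$ tensor with the correct sign rather than a sign-flipped variant.
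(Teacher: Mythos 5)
Your proposal is correct and follows essentially the same route as the paper: identify the connection forms of $D^t_s$ via (\ref{R3}), write the structure equation with the extra $-s^2\,\ov{(\theta_2)}^k_i\wedge(\theta_2)^j_k$ term, evaluate it at $(e_k,\bar e_l)$ using (\ref{2.6}), and combine with Lemma \ref{lem2.01} applied with parameter $p=t-ts$ so that the two copies of the bracket merge into the coefficient $(s^2-1)$. The sign bookkeeping you flag as the delicate point works out exactly as you describe, matching the paper's displayed intermediate identity $R^D_{k\bar{l}i\bar{j}}=R^p_{k\bar{l}i\bar{j}}+s^2\{\ov{T^k_{rj}}T^l_{ir}-(T^j_{\bar{r}\bar{l}}+T^r_{\bar{l}\bar{j}}+T^l_{\bar{r}\bar{j}})(\ov{T^r_{\bar{i}\bar{k}}}+\ov{T^i_{\bar{k}\bar{r}}}+\ov{T^k_{\bar{i}\bar{r}}})\}$.
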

In the rest of the section, we discuss the holomorphic sectional curvature of any metric connection. Given any metric connection $\nabla$, namely $\nabla g=0$, the holomorphic sectional curvature of $\nabla$ on a $J$-invariant tangent plane $\Sigma$ is denoted to be $H^\nabla(\Sigma)$ which is the sectional curvature of $\nabla$ on $\Sigma$. If $\Sigma$ is spanned by $\{X, JX\}$ for $X\in TM\setminus \{0\}$, then $$H^\nabla(\Sigma)=\dfrac{R^\nabla(X,JX,JX,X)}{|X|^4}.$$ Let $\eta=\dfrac{1}{\sqrt{2}}(X-\sqrt{-1}JX)$. It holds that $$H^\nabla(\Sigma)=H^\nabla(\eta)=\dfrac{R^{\nabla}(\eta,\ov{\eta}, \eta,\ov{\eta})}{|\eta|^4}.$$
Assume that $R^{\nabla}_{i\bar{j}k\bar{l}}$ are the components of $R^{\nabla}$. Define its symmetrization tensor components to be 
\begin{align}\label{2.30}
\widehat{R}^{\nabla}_{i\bar{j}k\bar{l}}=\dfrac{1}{4}(R^{\nabla}_{i\bar{j}k\bar{l}}+R^{\nabla}_{k\bar{j}i\bar{l}}+R^{\nabla}_{i\bar{l}k\bar{j}}+R^{\nabla}_{k\bar{l}i\bar{j}}).\end{align}
The following characterization of Hermitian metric of pointwise constant holomorphic sectional curvature is given in \cite{B} (see also \cite{CCN}).
\begin{lem} \label{lem2.03}
At any $q\in M$, $H^{\nabla}(\eta)=c$ for any $\eta\in T_q^{1,0}M\setminus \{0\}$ if and only if $\widehat{R}^{\nabla}_{i\bar{j}k\bar{l}}=\dfrac{c}{2}(\delta_{ij}\delta_{kl}+\delta_{il}\delta_{kj})$ with respect to any unitary frames near $q$.
\end{lem}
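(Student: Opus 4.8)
The plan is to convert the pointwise condition over all holomorphic directions into a single polynomial identity and then polarize. Fix $q$ and a unitary frame $\{e_i\}$ near $q$, and write $\eta=\sum_i\xi^ie_i$ with $\xi\in\mathbb C^n$. Since the frame is unitary, $|\eta|^2=\sum_i|\xi^i|^2$ and $R^{\nabla}(\eta,\ov\eta,\eta,\ov\eta)=\sum\xi^i\ov{\xi^j}\xi^k\ov{\xi^l}R^{\nabla}_{i\bar j k\bar l}$, while $|\eta|^4=\sum\xi^i\ov{\xi^j}\xi^k\ov{\xi^l}\delta_{ij}\delta_{kl}$. Hence $H^{\nabla}(\eta)=c$ for every nonzero $\eta$ is equivalent to the vanishing of the quartic $P(\xi,\ov\xi):=\sum\xi^i\ov{\xi^j}\xi^k\ov{\xi^l}\,S_{i\bar j k\bar l}$ for all $\xi$, where $S_{i\bar j k\bar l}:=R^{\nabla}_{i\bar j k\bar l}-c\,\delta_{ij}\delta_{kl}$.

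The ``if'' direction is then immediate: the monomial $\xi^i\ov{\xi^j}\xi^k\ov{\xi^l}$ is invariant under the swaps $i\leftrightarrow k$ and $j\leftrightarrow l$, so the contraction only detects the symmetrization $\widehat S_{i\bar j k\bar l}$ defined in (\ref{2.30}). A direct check of (\ref{2.30}) gives $\widehat{(\delta_{ij}\delta_{kl})}=\tfrac12(\delta_{ij}\delta_{kl}+\delta_{il}\delta_{kj})$; thus the stated conclusion $\widehat R^{\nabla}_{i\bar j k\bar l}=\tfrac c2(\delta_{ij}\delta_{kl}+\delta_{il}\delta_{kj})$ is exactly $\widehat S=0$, which forces $P\equiv0$, i.e. $H^{\nabla}(\eta)=c$.

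For the converse I would polarize. Treating $\xi^i,\ov{\xi^j}$ as the standard complex coordinates on $\mathbb C^n\cong\mathbb R^{2n}$, $P$ is a polynomial whose identical vanishing lets me extract coefficients with the Wirtinger operators $\partial/\partial\xi,\partial/\partial\ov\xi$; a short computation yields $\frac{\partial^4P}{\partial\xi^i\,\partial\xi^k\,\partial\ov{\xi^j}\,\partial\ov{\xi^l}}=S_{i\bar j k\bar l}+S_{k\bar j i\bar l}+S_{i\bar l k\bar j}+S_{k\bar l i\bar j}=4\,\widehat S_{i\bar j k\bar l}$, so $P\equiv0$ forces $\widehat S=0$, which is precisely $\widehat R^{\nabla}_{i\bar j k\bar l}=\tfrac c2(\delta_{ij}\delta_{kl}+\delta_{il}\delta_{kj})$. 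Since both sides of this identity are unitary tensors, it then holds in every unitary frame near $q$. The only real subtlety is the step asserting that the quartic $P$ determines its symmetrized coefficients; the Wirtinger-derivative computation settles this cleanly, and one could instead substitute $\xi=e_r+\lambda e_s$ for enough values of the scalar $\lambda$ and compare coefficients, at the cost of more bookkeeping. Everything else is formal.
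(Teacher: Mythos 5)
Your proof is correct. The paper itself gives no proof of this lemma---it is quoted from Balas \cite{B} (see also \cite{CCN})---and your polarization argument is precisely the standard proof of that cited fact: the Wirtinger-derivative computation $\frac{\partial^4P}{\partial\xi^i\,\partial\xi^k\,\partial\ov{\xi^j}\,\partial\ov{\xi^l}}=4\,\widehat S_{i\bar jk\bar l}$ agrees with the symmetrization (\ref{2.30}), the ``if'' direction correctly uses that the contraction against $\xi^i\ov{\xi^j}\xi^k\ov{\xi^l}$ only sees $\widehat S$ together with $\widehat{(\delta_{ij}\delta_{kl})}=\tfrac12(\delta_{ij}\delta_{kl}+\delta_{il}\delta_{kj})$, and unitary invariance of both sides legitimately upgrades the identity from one frame to all unitary frames near $q$.
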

 
 \section{self-duality}
In this section, we relate almost Hermitian surfaces with pointwise constant holomorphic sectional curvature to Riemannian self-dual 4-manifolds. We refer to \cite{ADM},\cite{Be} and the reference therein for more details on self-dual 4-manifolds. 

Given a Riemannian 4-manifold $(M,g)$, $g$ induces a metric on the bundle $\Lambda^2TM$. The curvature of the Levi-Civita connection $\nabla^{LC}$ induces a self-adjoint endomorphism $\mathcal R$ of $\Lambda^2TM$ by $g(\mathcal R(X\wedge Y), Z\wedge W)=-R(X,Y,Z,W)$, for $X,Y,Z,W\in TM$. The Hodge $*$ operator on $\Lambda^2TM$ preserves $g$ and satisfies $* ^2=id$. The $+1, -1$ eigenbundles of $*$ are denoted by $\Lambda^2_+, \Lambda^2_-$. Define the Weyl tensor operator to be
$$\mathcal W=\dfrac{1}{2}(\mathcal R+*\mathcal R*)-\dfrac{s_g}{12}id,$$ 
where $s_g$ is the Riemannian scaler curvature of $g$. Define $$\mathcal W_+=\dfrac{1}{2}(\mathcal W+*\mathcal W), \ \ \ \mathcal W_-=\dfrac{1}{2}(\mathcal W-*\mathcal W).$$ 
Then $\mathcal W_+|_{\Lambda^2_+}=\mathcal W|_{\Lambda^2_+}, \mathcal W_+|_{\Lambda^2_-}=0$ and $\mathcal W_-|_{\Lambda^2_-}=\mathcal W|_{\Lambda^2_-}, \mathcal W_-|_{\Lambda^2_+}=0$. 
And for $\al_+, \beta_+\in \Lambda^2_+$, $\al_-, \beta_-\in \Lambda^2_-$, 
\begin{align}
g(\mathcal W_+(\al_+), \beta_+)&=g(\mathcal R(\al_+), \beta_+)-\dfrac{s_g}{12}g(\al_+, \beta_+), \notag\\
g(\mathcal W_-(\al_-), \beta_-)&=g(\mathcal R(\al_-),\beta_-)-\dfrac{s_g}{12}g(\al_-, \beta_-). \label{3.12}
\end{align}
The operators $\mathcal R, \mathcal W, \mathcal W_+, \mathcal W_-$ and the inner product $g$ can be all extended complex linearly to $\Lambda^2TM\otimes \mathbb C$. 
\begin{defn}
$(M,g)$ is called self-dual (anti-self-dual) if $\mathcal W_-=0$ (resp. $\mathcal W_+=0$). 
\end{defn}
For an almost Hermitian surface $(M,J,g)$, fix $q\in M$ and assume $e_1,e_2$ to be a local unitary frame in $T^{1,0}M$. Then
$\{e_1\wedge \bar{e}_2, \dfrac{1}{\sqrt{2}}(e_1\wedge \bar{e}_1-e_2\wedge \bar{e}_2), \bar{e}_1\wedge e_2\}$ forms a unitary basis of $\Lambda^2_-\otimes \mathbb C$. Applying (\ref{3.12}) to this basis, the following characterization of self-duality of almost Hermitian surfaces can be obtained, which has been obtained in Lemma 4.1 in \cite{ADM} for the Hermitian case (see also Lemma 3 in \cite{CZ}) and in the proof of Theorem 1.3 in \cite{LU} for the almost Hermitian case.
\begin{lem} \label{lem3.1}Let $(M,J,g)$ be an almost Hermitian surface. Then $(M,g)$ is self dual if and only if the following equations hold:
\begin{align*}
R_{1\bar{2}1\bar{2}}=0, \ \ \ R_{1\bar{2}2\bar{2}}-R_{1\bar{2}1\bar{1}}=0,\\
2R_{1\bar{2}2\bar{1}}+2R_{1\bar{1}2\bar{2}}-R_{1\bar{1}1\bar{1}}-R_{2\bar{2}2\bar{2}}=0.
\end{align*}
\end{lem}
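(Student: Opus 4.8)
The plan is to translate the abstract self-duality condition $\mathcal{W}_- = 0$ into explicit curvature equations by computing the action of $\mathcal{R}$ on the chosen unitary basis of $\Lambda^2_- \otimes \mathbb{C}$. First I would recall that, by the very definition given above, $\mathcal{W}_-$ vanishes precisely when $\mathcal{W}|_{\Lambda^2_-} = 0$; by \eqref{3.12} this is equivalent to requiring that for all $\alpha_-, \beta_- \in \Lambda^2_-$ one has
\begin{align*}
g(\mathcal{R}(\alpha_-), \beta_-) = \frac{s_g}{12}\, g(\alpha_-, \beta_-).
\end{align*}
Since $\mathcal{W}_-$ is a self-adjoint endomorphism of a three-dimensional (complexified) space, this says that $\mathcal{R}$, restricted to $\Lambda^2_-$, acts as the scalar $s_g/12$ times the identity.

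The core computation is then to expand $g(\mathcal{R}(\cdot), \cdot)$ on the three basis vectors $e_1 \wg \bar{e}_2$, $\tfrac{1}{\sqrt 2}(e_1\wg\bar{e}_1 - e_2\wg\bar{e}_2)$, $\bar{e}_1\wg e_2$ using the defining relation $g(\mathcal{R}(X\wg Y), Z\wg W) = -R(X,Y,Z,W)$. I would catalogue the nine matrix entries $g(\mathcal{R}(v_a), v_b)$ for $a,b \in \{1,2,3\}$ in terms of the curvature components $R_{i\bar{j}k\bar{l}}$ of the Levi-Civita connection, exploiting the symmetries of $R$ and the unitarity of the frame. The self-adjointness of $\mathcal{R}$ halves the number of independent conditions, and the statement that this matrix equals $\tfrac{s_g}{12}\,\mathrm{id}$ becomes: the three off-diagonal entries vanish, and the three diagonal entries are equal. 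Matching the vanishing off-diagonal entries should reproduce $R_{1\bar 2 1\bar 2}=0$ and $R_{1\bar 2 2\bar 2}-R_{1\bar 2 1\bar 1}=0$, while the equality of the diagonal entries (which eliminates the unknown scalar $s_g/12$ by subtraction) should give the third relation $2R_{1\bar 1 2\bar 2}+2R_{1\bar 2 2\bar 1}-R_{1\bar 1 1\bar 1}-R_{2\bar 2 2\bar 2}=0$.

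The main obstacle I anticipate is purely bookkeeping: one must carefully keep track of the index orderings and complex conjugation conventions when converting between $\mathcal{R}$ acting on bivectors $e_i \wg \bar e_j$ and the four-index tensor $R_{i\bar j k \bar l}$, since a sign error or a transposed index pair would scramble the resulting equations. In particular, the factor $\tfrac{1}{\sqrt 2}$ in the middle basis vector interacts with the diagonal curvature terms, and the real-versus-complex curvature symmetries (for an \emph{almost} Hermitian surface, where $J$ need not be integrable, the full set of Bianchi-type symmetries of the Levi-Civita curvature still holds because $R$ is the Riemannian curvature) must be invoked to collapse the expressions into the stated form. I would organize the calculation by first listing the action $\mathcal{R}(v_a)$ in the orthonormal complexified basis of $\Lambda^2 \otimes \mathbb{C}$ and then reading off inner products, rather than computing entry-by-entry, so that the scalar-multiple condition is verified directly against the diagonal form.
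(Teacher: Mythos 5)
Your proposal is correct and is essentially the paper's own argument: the paper obtains the lemma in exactly this way, by applying \eqref{3.12} to the unitary basis $\{e_1\wedge\bar e_2,\ \tfrac{1}{\sqrt 2}(e_1\wedge\bar e_1-e_2\wedge\bar e_2),\ \bar e_1\wedge e_2\}$ of $\Lambda^2_-\otimes\mathbb C$ and reading off the matrix entries of $\mathcal W_-$ (the explicit bookkeeping being delegated to the cited references \cite{ADM}, \cite{CZ}, \cite{LU}). The one step you should make explicit in the ``if'' direction is that vanishing off-diagonal entries plus mutually equal diagonal entries only say that the $\Lambda^2_-$-block of $\mathcal R$ is a scalar multiple of the identity; it is the trace identity $\tr\big(\mathcal W_-|_{\Lambda^2_-}\big)=0$ (a consequence of the first Bianchi identity, equivalently that the trace of $\mathcal R$ splits evenly between $\Lambda^2_+$ and $\Lambda^2_-$) that forces this scalar to equal $s_g/12$ and hence $\mathcal W_-=0$.
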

Then we prove
\begin{prop}\label{prop2.1}
Let $(M,g)$ be an almost Hermitian surface and $(t,s)\in \mathbb R^2$. Assume that the holomorphic sectional curvature with respect to $D^t_s$ is pointwise constant. Then $M$ is self-dual. 
\end{prop}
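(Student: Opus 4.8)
The plan is to verify the three self-duality equations of Lemma \ref{lem3.1} directly, using the pointwise constant holomorphic sectional curvature assumption on $D^t_s$ together with the curvature-comparison formula of Lemma \ref{lem2.02}. The key observation is that self-duality, as stated in Lemma \ref{lem3.1}, is a condition purely on the curvature $R$ of the Levi-Civita connection, whereas our hypothesis is about $R^D$. So the first task is to translate the hypothesis into information about $R^D$ via Lemma \ref{lem2.03}, and then to transfer that information to $R$ using the difference $R^D - R$ supplied by Lemma \ref{lem2.02}.

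First I would apply Lemma \ref{lem2.03} to $\nabla = D^t_s$: pointwise constant holomorphic sectional curvature $c$ means $\widehat{R}^D_{i\bar{j}k\bar{l}} = \frac{c}{2}(\delta_{ij}\delta_{kl}+\delta_{il}\delta_{kj})$ for all unitary frames. In complex dimension $2$ the indices run over $\{1,2\}$, so this is a finite list of scalar equations on the symmetrized components of $R^D$. The strategy is to feed the specific index choices appearing in Lemma \ref{lem3.1}, namely $(1\bar21\bar2)$, $(1\bar22\bar2)-(1\bar21\bar1)$, and the combination $2R_{1\bar22\bar1}+2R_{1\bar12\bar2}-R_{1\bar11\bar1}-R_{2\bar22\bar2}$, into the symmetrization. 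Crucially, each of the three self-duality expressions is already built so that it equals its own symmetrization up to the curvature symmetries, so I expect the constant-curvature conditions on $\widehat{R}^D$ to force the analogous three combinations of $R^D$ to vanish: e.g. $\widehat{R}^D_{1\bar21\bar2}=0$ gives $R^D_{1\bar21\bar2}=0$ after using $R^D_{1\bar21\bar2}=R^D_{2\bar11\bar2}$ etc., and similarly for the other two. I would carry out this bookkeeping carefully, since the symmetrization \eqref{2.30} mixes the four index slots.

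Next I would subtract. Writing $\Delta_{k\bar{l}i\bar{j}} := R^D_{k\bar{l}i\bar{j}} - R_{k\bar{l}i\bar{j}}$, Lemma \ref{lem2.02} gives $\Delta$ explicitly in terms of the Chern torsion $T$ and its covariant derivatives. The point is that the \emph{same} three combinations that define self-duality should annihilate the correction term $\Delta$: that is, I expect $\Delta_{1\bar21\bar2}=0$, $\Delta_{1\bar22\bar2}-\Delta_{1\bar21\bar1}=0$, and $2\Delta_{1\bar22\bar1}+2\Delta_{1\bar12\bar2}-\Delta_{1\bar11\bar1}-\Delta_{2\bar22\bar2}=0$ to hold \emph{identically}, purely from the algebraic structure of the torsion terms in two complex dimensions (for instance, the torsion-quadratic and torsion-derivative pieces each satisfy these trace/antisymmetry relations). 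If this is the case, then vanishing of the three combinations for $R^D$ transfers verbatim to the same three combinations for $R$, which are exactly the equations in Lemma \ref{lem3.1}, and self-duality follows.

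The main obstacle will be the last step: showing that the torsion correction term $\Delta$ satisfies the three self-duality combinations identically. This requires expanding the terms $p(T^j_{ik,\bar{l}}+\ov{T^i_{jl,\bar{k}}})$, $(p^2-2p)(T^r_{ik}\ov{T^r_{jl}}-T^j_{rk}\ov{T^i_{rl}})$, and the $(s^2-1)$ torsion-quadratic block from \eqref{2.21} in dimension two, substituting the three specific index patterns, and checking that the sums telescope to zero using the antisymmetry $T^i_{jk}=-T^i_{kj}$ and the structural identities among the second-type torsion terms. I would also keep in mind that for integrable $J$ the formula simplifies (the $\ov{T_{\bar{i}\bar{k}}}$ terms drop), but since the proposition is stated for general almost Hermitian surfaces I must treat the full expression; the almost-complex torsion terms are where the algebra is heaviest, and I would organize the check by grouping the derivative terms and the quadratic terms separately.
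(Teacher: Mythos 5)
Your high-level plan (combine Lemma \ref{lem2.03}, Lemma \ref{lem2.02} and Lemma \ref{lem3.1}, and check that the torsion corrections cancel in the three self-duality combinations) is the right skeleton, and it is also the paper's; but the route you take through the \emph{un-symmetrized} tensors $R^D$ and $\Delta=R^D-R$ breaks down at both of your middle steps. First, the desymmetrization step is invalid: $D^t_s$ has torsion, so its curvature does not satisfy the pair-interchange symmetry $R^D_{i\bar{j}k\bar{l}}=R^D_{k\bar{l}i\bar{j}}$ (for the Levi-Civita curvature this symmetry follows from the first Bianchi identity, which requires vanishing torsion). Consequently $\widehat{R}^D_{1\bar{1}1\bar{2}}=0$ only gives $R^D_{1\bar{1}1\bar{2}}+R^D_{1\bar{2}1\bar{1}}=0$, and $\widehat{R}^D_{1\bar{2}2\bar{2}}=0$ only gives $R^D_{1\bar{2}2\bar{2}}+R^D_{2\bar{2}1\bar{2}}=0$; the combination $R^D_{1\bar{2}2\bar{2}}-R^D_{1\bar{2}1\bar{1}}$ you want cannot be isolated from these. (Your test case $(1\bar{2}1\bar{2})$ is the one index pattern where the symmetrization is trivially the identity, which makes it misleading.) Second, and more seriously, your key claim that $\Delta$ satisfies the three combinations \emph{identically} is false: the torsion-derivative terms in Lemma \ref{lem2.02} survive. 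Taking $(k\bar{l}i\bar{j})=(1\bar{2}2\bar{2})$ and $(1\bar{2}1\bar{1})$ in (\ref{2.21}), the difference $\Delta_{1\bar{2}2\bar{2}}-\Delta_{1\bar{2}1\bar{1}}$ contains the term $p\,(T^2_{21,\bar{2}}-\ov{T^1_{12,\bar{1}}})$, which is not zero for a general Hermitian metric when $p=t(1-s)\neq 0$; similar derivative terms survive in your third combination.

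Both problems disappear if you never desymmetrize, and this is exactly how the paper argues: apply the symmetrization (\ref{2.30}) directly to the comparison formula of Lemma \ref{lem2.02}. Since $T^j_{ik,\bar{l}}$ and $T^r_{ik}\ov{T^r_{jl}}$ are antisymmetric under $i\leftrightarrow k$, they are annihilated by the symmetrization, so $\widehat{R}^D-\widehat{R}$ consists only of torsion-quadratic terms; those do cancel in the three relevant combinations using only $T^i_{jk}=-T^i_{kj}$ and $T^i_{\bar{j}\bar{k}}=-T^i_{\bar{k}\bar{j}}$ (for instance the corrections to $\widehat{R}^D_{1\bar{1}1\bar{2}}$ and to $\widehat{R}^D_{1\bar{2}2\bar{2}}$ coincide, so they drop out of the difference). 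This yields three equations on $\widehat{R}$, and only at this last stage does one pass from $\widehat{R}$ to $R$ -- a step that is legitimate precisely because $R$ is the Levi-Civita curvature and enjoys the classical symmetries: $\widehat{R}_{1\bar{2}1\bar{2}}=R_{1\bar{2}1\bar{2}}$, $\widehat{R}_{1\bar{1}1\bar{2}}=R_{1\bar{2}1\bar{1}}$, $\widehat{R}_{1\bar{2}2\bar{2}}=R_{1\bar{2}2\bar{2}}$, $\widehat{R}_{1\bar{1}2\bar{2}}=\frac{1}{2}(R_{1\bar{1}2\bar{2}}+R_{1\bar{2}2\bar{1}})$, which turns the three equations into exactly those of Lemma \ref{lem3.1}. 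So the missing idea is that the symmetrization must be carried through the entire argument: it is what removes the derivative terms that your un-symmetrized computation cannot cancel, and it postpones all use of curvature symmetries to the tensor $R$, the only one that actually has them.
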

\begin{proof}
We follow the same steps as in \cite{CZ}. As $D^t_s$ has pointwise constant holomorphic sectional curvature $c$, by Lemma \ref{lem2.03}, for any unitary frame, $$\widehat{R}^{D}_{i\bar{j}k\bar{l}}=\dfrac{c}{2}(\delta_{ij}\delta_{kl}+\delta_{il}\delta_{kj}).$$
In particular, 
\begin{align} \widehat{R}^{D}_{1\bar{2}1\bar{2}}=\widehat{R}^{D}_{1\bar{1}1\bar{2}}=\widehat{R}^{D}_{1\bar{2}2\bar{2}}=0,\notag \\ 
\widehat{R}^{D}_{1\bar{1}1\bar{1}}=\widehat{R}^{D}_{2\bar{2}2\bar{2}}=c, \ \ \ \widehat{R}^{D}_{1\bar{1}2\bar{2}}=\dfrac{c}{2}. \label{3.21}
\end{align}
Next, from (\ref{2.21}) we have the following:
\begin{align*}
\widehat{R}^{D}_{k\bar{l}i\bar{j}}&=\widehat{R}_{k\bar{l}i\bar{j}}-\dfrac{1}{4}(p^2-2p-1+s^2)(T^j_{rk}\ov{T^i_{rl}}+T^j_{ri}\ov{T^k_{rl}}+T^l_{rk}\ov{T^i_{rj}}+T^l_{ri}\ov{T^k_{rj}})\\
&+(1-s^2)(T^j_{\bar{r}\bar{l}}+T^l_{\bar{r}\bar{j}})(\ov{T^i_{\bar{k}\bar{r}}}
+\ov{T^k_{\bar{i}\bar{r}}}). 
\end{align*}
In particular, letting $b=p^2-2p-1+s^2$, we get 
\begin{align}
 \widehat{R}^{D}_{1\bar{2}1\bar{2}}&= \widehat{R}_{1\bar{2}1\bar{2}}, \notag\\
 \widehat{R}^{D}_{1\bar{2}2\bar{2}}&=\widehat{R}_{1\bar{2}2\bar{2}}-\frac{1}{2}bT^2_{12}\ov{T^1_{12}}+2(1-s^2)T^2_{\bar{1}\bar{2}}\ov{T^1_{\bar{2}\bar{1}}}, \notag\\
 \widehat{R}^{D}_{1\bar{1}1\bar{2}}&=\widehat{R}_{1\bar{1}1\bar{2}}-\frac{1}{2}bT^2_{21}\ov{T^1_{21}}+2(1-s^2)T^2_{\bar{2}\bar{1}}\ov{T^1_{\bar{1}\bar{2}}},\notag\\
\widehat{R}^{D}_{1\bar{1}1\bar{1}}&=\widehat{R}_{1\bar{1}1\bar{1}}-b|T^1_{21}|^2-4(1-s^2)|T^1_{\bar{2}\bar{1}}|^2,\notag\\
\widehat{R}^{D}_{2\bar{2}2\bar{2}}&=\widehat{R}_{2\bar{2}2\bar{2}}-b|T^2_{12}|^2-4(1-s^2)|T^2_{\bar{1}\bar{2}}|^2,\notag\\
\widehat{R}^{D}_{1\bar{1}2\bar{2}}&=\widehat{R}_{1\bar{1}2\bar{2}}-\frac{1}{4}b(|T^2_{21}|^2+|T^1_{12}|^2)-(1-s^2)(|T^2_{\bar{1}\bar{2}}|^2+|T^1_{\bar{1}\bar{2}}|^2).\notag
 \end{align}
 So from (\ref{3.21}), we get
 \begin{align}
  \widehat{R}_{1\bar{2}1\bar{2}}=0,\ \ \  \widehat{R}_{1\bar{1}1\bar{2}}-\widehat{R}_{1\bar{2}2\bar{2}}=0 \notag\\
  4\widehat{R}_{1\bar{1}2\bar{2}}-\widehat{R}_{1\bar{1}1\bar{1}}-\widehat{R}_{2\bar{2}2\bar{2}}=0 \label{3.22}
  \end{align}
 For curvatures of the Levi-Civita connection,  we have
 \begin{align*}
 \widehat{R}_{1\bar{2}1\bar{2}}&={R}_{1\bar{2}1\bar{2}},\ \ \ \widehat{R}_{1\bar{1}1\bar{2}}=\dfrac{1}{2}({R}_{1\bar{1}1\bar{2}}+{R}_{1\bar{2}1\bar{1}})={R}_{1\bar{2}1\bar{1}},\\
 \widehat{R}_{1\bar{2}2\bar{2}}&=\dfrac{1}{2}({R}_{1\bar{2}2\bar{2}}+{R}_{2\bar{2}1\bar{2}})={R}_{1\bar{2}2\bar{2}},\\
 \widehat{R}_{1\bar{1}2\bar{2}}&=\dfrac{1}{4}({R}_{1\bar{1}2\bar{2}}+{R}_{2\bar{1}1\bar{2}}+{R}_{1\bar{2}2\bar{1}}+{R}_{2\bar{2}1\bar{1}})=\dfrac{1}{2}({R}_{1\bar{1}2\bar{2}}+{R}_{1\bar{2}2\bar{1}}),\\
 \widehat{R}_{1\bar{1}1\bar{1}}&={R}_{1\bar{1}1\bar{1}}, \ \ \ \widehat{R}_{2\bar{2}2\bar{2}}={R}_{2\bar{2}2\bar{2}}
 \end{align*}
 Therefore, putting them into (\ref{3.22}), we get
 \begin{align*}
  {R}_{1\bar{2}1\bar{2}}=0,\ \ \ \ {R}_{1\bar{2}1\bar{1}}-{R}_{1\bar{2}2\bar{2}}=0 \\
  2({R}_{1\bar{1}2\bar{2}}+{R}_{1\bar{2}2\bar{1}})-{R}_{1\bar{1}1\bar{1}}-R_{2\bar{2}2\bar{2}}=0
  \end{align*}
  These are exactly the equations in Lemma \ref{lem3.1}. So we finish the proof.
\end{proof}
\begin{remk}
When $(M,g)$ is almost K\"ahler, i.e. $d\omega=0$, the above result for the Gauduchon connection $\nabla^t$ has been proved in \cite{LU}.
\end{remk}

\section{Conformal changes for curvatures of canonical connections}
In this section, we compute the transformation of curvature components of the canonical connections $\nabla^t$ and $D^t_s$ under conformal change of Hermitian metrics on a complex manifold. This will be applied in the next sections to prove the main results. 

We first compute the case for $\nabla^t$ which is more illustrative. Let $g$ be a Hermitian metric on a complex manifold $(M,J)$. Fix any $q\in M$. As before, let $\{e_1, e_2, ..., e_n\} $ be a local unitary frame of $(1,0)$ vectors near $q$ with $\{\vp^1, \vp^2, ..., \vp^n\}$ being the dual frame. 
For any $t\in \mathbb R$, denote $\nabla^t$ to be the $t$-Gauduchon connection of $g$. Write $\nabla^te_i=(\theta^t)^j_ie_j$. Then
\begin{align}\label{c00}
(\theta^t)_i^j=\theta_i^j+(1-t)(T_{ik}^j\vp^k-\ov{T_{jk}^i}\ov{\vp}^k),
\end{align}
where $\theta$ is the connection matrix of the Chern connection $\nabla^c$ of $g$ and $\{T_{ik}^j\}$ are the components of the torsion forms of the $\nabla^c$ by $\tau^i=T^i_{jk}\varphi^j\wedge\varphi^k$ with $T^i_{jk}=-T^i_{kj}$. Note that there are no components of $T^i_{\bar{j}\bar{k}}$  since $J$ is integrable. 

Let $\wtl{g}=e^{2f}g$ be a metric conformal to $g$. Define $\wtl{e}_i=e^{-f}e_i$ and $\wtl{\vp}^i=e^f\vp^i$. Then $\{\wtl{e}_i\}$ forms a unitary frame of $\wtl{g}$ near $q$ with the unitary coframe being $\{\wtl{\vp}^i\}$. Denote $\wtl{\nabla}^c$ to be the Chern connection of $\wtl{g}$ with $\wtl{\theta}$ being the connection matrix and $\{\wtl{T}_{jk}^i\}$ being the components of the torsion forms $\wtl{\tau}^i$ of $\wtl{\nabla}^c$. Direct calculations from the structure equations (see \cite{CZ}, \cite{YZZ}) give that
\begin{align}
(\wtl{\theta})_i^j=\theta_i^j+(\pa f-\bar{\pa}f)\delta_{ij}, \ \ \ \  \tl{\tau}^i=e^f(\tau^i+2\pa f\vp^i). \label{4.01}
\end{align}
Then \begin{align}
    \wtl{T}_{jk}^i=e^{-f}(T_{jk}^i+f_j\de_{ik}-f_k\de_{ij}),
\end{align}
where $f_j=e_jf$. Denote $\wtl{\nabla}^t$ to be the $t$-Gauduchon connection of $\wtl{g}$. Assume that $\wtl{\nabla}^t\wtl{e}_i=(\wtl{\theta}^t)^j_i\wtl{e}_j$. 
Then by (\ref{c00}), it follows that
\begin{align}
( \wtl{\theta}^t)_i^j-(\theta^t)_i^j=t(\pa f-\bar{\pa}f)\delta_{ij}+(1-t)(f_i\vp^j-f_{\bj}\ov{\vp^i}). \label{4.22}
\end{align}
 Denote $\Theta^t$ and $\wtl{\Theta}^t$ to be the curvature matrix of $\nabla^t$ and $\wtl{\nabla}^t$. As \begin{align*}
(\Theta^t)_i^j=d(\theta^t)_i^j-(\theta^t)_i^k\wedge (\theta^t)_k^j, \ \ \ \ (\wtl{\Theta}^t)_i^j=d(\wtl{\theta}^t)_i^j-(\wtl{\theta}^t)_i^k\wedge (\wtl{\theta}^t)_k^j,
\end{align*}
by (\ref{4.22}), we have 
\begin{align}&[(\wtl{\Theta}^t)_i^j]^{1,1}-[(\Theta^t)_i^j]^{1,1} \label{4.23}\\
 =&-2t\pa\bar{\pa}f\delta_{ij}+(1-t)(\bar{\pa}(f_i\vp^j)-\pa(f_{\bj}\ov{\vp^i}))-(1-t)^2(f_kf_{\bar{k}}\vp^j\wg \ov{\vp^i}-f_if_{\bj}\vp^k\wg\ov{\vp^k}). \notag
\end{align}
Here $[(\wtl{\Theta}^t)_i^j]^{1,1}$ means the $(1,1)$ part of the curvature forms.
Let \begin{align*}
R^t_{k\bar{l}i\bar{j}}&=(\Theta^t)_i^j(e_k, e_{\bar{l}})=R^t(e_k, e_{\bar{l}}, e_i, e_{\bar{j}}),\\
\wtl{R}^t_{k\bar{l}i\bar{j}}&=(\wtl{\Theta}^t)_i^j(\wtl{e}_k, \wtl{e}_{\bar{l}})=\wtl{R}^t(\wtl{e}_k, \wtl{e}_{\bar{l}}, \wtl{e}_i, \wtl{e}_{\bar{j}}).
\end{align*}
By Lemma 1 in \cite{CZ}, we may choose a local unitary frame of (1,0) vectors near $q$ such that $\theta^t|q=0$. 
By (\ref{c00}) and the structure equation $d\vp^i=-\theta^i_j\wg \vp^j+\tau^i$, we have at $q$
\begin{align*}
\bar{\pa}\vp^i=-(\theta^i_j)^{0,1}\wg \vp^j=(1-t)\ov{T^k_{il}}\vp^k\wg \ov{\vp^l}.
\end{align*}
Let $f_{k\bl}=\ov{e_l}e_kf-(\na^t_{\ov{e_l}}e_k)f,\ f_{\bl k}=e_k\ov{e_l}f-(\na^t_{e_k}\ov{e_l})f$, i.e., $f_{k\bl}$ and $f_{\bl k}$ are the covariant derivatives of $f$ with respect to $\nabla^t$. Then 
\begin{align}\label{comm}
f_{\bj k}-f_{k\bj}=(1-t)(f_{\bar{r}}T_{rk}^j-f_r\ov{T_{rj}^k}).
\end{align}
Then at $q$, 
\begin{align}
\pa\bar{\pa}f(e_k,\ov{e_l})=-\bar{\pa}\pa f(e_k,\ov{e_l})=f_{k\bl}-(1-t)f_r\ov{T_{rl}^k}. \label{4.03}
\end{align} Also, 
\begin{align}
&[\bar{\pa}(f_i\vp^j)-\pa(f_{\bj}\ov{\vp^i})](e_k,\ov{e_l})\notag\\
=&-f_{i\bl}\de_{jk}-f_{k\bj}\de_{il}+(1-t)(f_i\ov{T_{jl}^k}-f_{\bar{r}}T_{rk}^j\de_{il}+f_r\ov{T_{rj}^k}\de_{il}+f_{\bj}T_{ik}^l).\label{4.04}
\end{align}
It follows from (\ref{4.23}) that 
\begin{align} 
&e^{2f}\wtl{R}^t_{k\bar{l}i\bar{j}}-R^t_{k\bar{l}i\bar{j}} \notag \\
=&-2tf_{k\bl}\delta_{ij}+2t(1-t)f_r\ov{T_{rl}^k}\delta_{ij}-(1-t)(f_{i\bl}\delta_{jk}+f_{k\bj}\de_{il}) \notag\\
&+(1-t)^2(f_i\ov{T_{jl}^k}-f_{\bar{r}}T_{rk}^j\de_{il}+f_r\ov{T_{rj}^k}\de_{il}+f_{\bj}T_{ik}^l-f_rf_{\bar{r}}\delta_{jk}\delta_{il}+f_if_{\bj}\delta_{kl})\label{h000}
\end{align}
Note that the above equality is tensorial and then holds for any unitary frames.\\
Denote $\widehat{R}^t_{i\bar{j}k\bar{l}}$ and 
$\widehat{R}^t_{i\bar{j}k\bar{l}}$ to be the symmetrization of the curvature $R^t$ and $\wtl{R}^t$ as defined in (\ref{2.30}). From (\ref{h000}) we get
\begin{align*}
&e^{2f}\widehat{\wtl{R}^t}_{k\bar{l}i\bar{j}}-\hat{R^t}_{k\bar{l}i\bar{j}}\\
=&-\dfrac{1}{2}(f_{i\bl}\delta_{jk}+f_{k\bl}\delta_{ij}+f_{i\bj}\delta_{kl}+f_{k\bj}\delta_{il})+\dfrac{(1-t)^2}{4}(f_if_{\bj}\delta_{kl}+f_kf_{\bj}\delta_{il}+f_if_{\bl}\delta_{kj}+f_kf_{\bl}\delta_{ij})\\
&-\dfrac{(1-t)^2}{4}(f_{\bar{r}}T_{rk}^j\delta_{il}+f_{\bar{r}}T_{rk}^l\delta_{ij}+f_{\bar{r}}T_{ri}^j\delta_{kl}+f_{\bar{r}}T_{ri}^l\delta_{kj}+2f_rf_{\bar{r}}(\delta_{kl}\delta_{ij}+\delta_{jk}\delta_{il}))\\
&+\dfrac{1-t^2}{4}(f_r\ov{T_{rl}^k}\delta_{ij}+f_r\ov{T_{rj}^k}\delta_{il}+f_r\ov{T_{rl}^i}\delta_{kj}+f_r\ov{T_{rj}^i}\delta_{kl}).
\end{align*}
Assume that $g$ is K\"ahler. Then $R^t_{i\bar{j}k\bar{l}}=R_{i\bar{j}k\bar{l}}=\widehat{R}_{i\bar{j}k\bar{l}}$ for any $t\in \mathbb R$ and $T^i_{jk}=0$. So
\begin{align}  
e^{2f}\widehat{\wtl{R}^t}_{k\bar{l}i\bar{j}}-R_{k\bar{l}i\bar{j}}=&-\dfrac{1}{2}(f_{i\bl}\delta_{jk}+f_{k\bl}\delta_{ij}+f_{i\bj}\delta_{kl}+f_{k\bj}\delta_{il}) \notag \\
&+\dfrac{(1-t)^2}{4}(f_if_{\bj}\delta_{kl}+f_kf_{\bj}\delta_{il}+f_if_{\bl}\delta_{kj}+f_kf_{\bl}\delta_{ij}).\notag \\
&- \dfrac{(1-t)^2}{2}f_rf_{\bar{r}}(\delta_{kj}\de_{il}+\de_{ij}\de_{kl}).\label{h11}
\end{align}

Next, consider the two-parameter canonical connection $D^t_s=(1-s)\nabla^t+s\nabla^{LC}$. Then by (\ref{R3}), we have 
$$D^t_s e_i=(\theta^p)_i^je_j+s(\ov{\theta_2})^j_i\bar{e}_j,$$
where $p=t-ts$ and $(\theta_2)^j_i=\ov{T^k_{ij}}\vp^k$ as $J$ is integrable. Denote $\Theta^D$ to be the $(n\times n)$ curvature matrix of 2-forms of $D^t_s$ given by $(\Theta^D)^j_i(X,Y)=R^D(X,Y,e_i,\bar{e}_j)$ for $X,Y\in TM$. 
Then by the structure equation, 
\begin{align}
(\Theta^D)_i^j&=d(\theta^p)_i^j-(\theta^p)_i^k\wedge (\theta^p)_k^j-s^2(\ov{\theta_2})_i^k\wedge(\theta_2)_k^j \notag\\
&=d(\theta^p)_i^j-(\theta^p)_i^k\wedge (\theta^p)_k^j-s^2T_{ik}^l\ov{T_{kj}^r}\ov{\vp^l}\wedge \vp^r. \label{4.24}\end{align}
Denote $\wtl{D}^t_s$ to be the $(r,s)$ canonical connection of $\wtl{g}$ with $\wtl{\theta}^D, \wtl{\Theta}^D$ being the connection and curvature matrix of $\wtl{D}^t_s$. Similarly,
\begin{align}
(\wtl{\Theta}^D)_i^j&=d(\wtl{\theta}^p)_i^j-(\wtl{\theta}^p)_i^k\wedge (\wtl{\theta}^p)_k^j-s^2\wtl{T}_{ik}^l\ov{\wtl{T}_{kj}^r}\ov{\wtl{\vp}^l}\wedge \wtl{\vp}^r. \label{4.25}\end{align}
By (\ref{4.22}), (\ref{4.24}) and (\ref{4.25}), we get
\begin{align*}
&[(\wtl{\Theta}^D)^j_i]^{1,1}-[(\Theta^D)_i^j]^{1,1}\\
=&-2p\pa\bar{\pa}f\de_{ij}+(1-p)(\bar{\pa}(f_i\vp^j)-\pa(f_j\ov{\vp}^i))\\
&-((p-1)^2+s^2)f_rf_{\bar{r}}\vp^j\wg\ov{\vp}^i+((p-1)^2-s^2)f_if_{\bj}\vp^r\wg\ov{\vp}^r\\
&-s^2(f_{\bj}T^l_{ik}\vp^k\wg\ov{\vp^l}-f_i\ov{T^k_{lj}}\vp^k\wg\ov{\vp^l}+f_{\bar{r}}T^l_{ri}\vp^j\wg\ov{\vp^l}+f_r\ov{T^k_{rj}}\vp^k\wg\ov{\vp^i})\\
&+s^2(f_i\vp^j\wg\bar{\pa}f+f_{\bj}\pa f\wg \ov{\vp}^i)
\end{align*}
Choose a local unitary frame of (1,0) vectors near $q$ such that $\theta^p|q=0$. By (\ref{c00}) and the structure equation, we have
\begin{align*}
\bar{\pa}\vp^i=-(\theta^i_j)^{0,1}\wg \vp^j=-(1-p)\ov{T^j_{il}}\ov{\vp^l}\wg\vp^j.
\end{align*}
Then at $q$, by (\ref{4.23}) and (\ref{h000}), we have
\begin{align}
&e^{2f}\wtl{R}^D_{k\bar{l}i\bar{j}}-R^D_{k\bar{l}i\bar{j}}\notag \\
=&-2pf_{k\bl}\delta_{ij}+2p(1-p)f_r\ov{T_{rl}^k}\delta_{ij}-(1-p)(f_{i\bl}\delta_{jk}+f_{k\bj}\de_{il}) \notag\\
&+(1-p)^2(f_i\ov{T_{jl}^k}-f_{\bar{r}}T_{rk}^j\de_{il}+f_r\ov{T_{rj}^k}\de_{il}+f_{\bj}T_{ik}^l-f_rf_{\bar{r}}\delta_{jk}\delta_{il}+f_if_{\bj}\delta_{kl})\notag \\
&+s^2(f_if_{\bl}\de_{jk}+f_{\bj}f_k\de_{il}-f_if_{\bj}\delta_{kl}-f_rf_{\bar{r}}\delta_{jk}\delta_{il})\notag \\
&+s^2(f_i\ov{T^k_{lj}}-f_{\bj}T^l_{ik}-f_{\bar{r}}T^l_{ri}\de_{jk}-f_r\ov{T^k_{rj}}\de_{il}).\label{4.33}
\end{align}
Here, the subscripts under $f$ stand for covariant derivatives with respect to $\nabla^p$. As before, the above equality holds under any unitary frames since it is tensorial.
Denote $\widehat{\wtl{R}^D}_{k\bar{l}i\bar{j}}$ and $\widehat{R^D_{k\bar{l}i\bar{j}}}$ to be the components of symmetrization of $\wtl{R}^D$ and $R^D$. Then 
\begin{align*}
&e^{2f}\widehat{\wtl{R}^D}_{k\bar{l}i\bar{j}}-\widehat{R^D_{k\bar{l}i\bar{j}}}\\
=&-\dfrac{1}{2}(f_{i\bl}\delta_{jk}+f_{k\bl}\delta_{ij}+f_{i\bj}\delta_{kl}+f_{k\bj}\delta_{il})+\dfrac{(p-1)^2+s^2}{4}(f_if_{\bj}\delta_{kl}+f_kf_{\bj}\delta_{il}+f_if_{\bl}\delta_{kj}+f_kf_{\bl}\delta_{ij})\\
&-\dfrac{(p-1)^2+s^2}{4}(f_{\bar{r}}T_{rk}^j\delta_{il}+f_{\bar{r}}T_{rk}^l\delta_{ij}+f_{\bar{r}}T_{ri}^j\delta_{kl}+f_{\bar{r}}T_{ri}^l\delta_{kj}+2f_rf_{\bar{r}}(\delta_{kl}\delta_{ij}+\delta_{jk}\delta_{il}))\\
&+\dfrac{1-p^2-s^2}{4}(f_r\ov{T_{rl}^k}\delta_{ij}+f_r\ov{T_{rj}^k}\delta_{il}+f_r\ov{T_{rl}^i}\delta_{kj}+f_r\ov{T_{rj}^i}\delta_{kl}).
\end{align*}
When $g$ is K\"ahler, we get
\begin{align}e^{2f}\widehat{\wtl{R}^D}_{k\bar{l}i\bar{j}}-\widehat{R^D_{k\bar{l}i\bar{j}}}=&-\dfrac{1}{2}(f_{i\bl}\delta_{jk}+f_{k\bl}\delta_{ij}+f_{i\bj}\delta_{kl}+f_{k\bj}\delta_{il})\notag\\
&+\dfrac{(p-1)^2+s^2}{4}(f_if_{\bj}\delta_{kl}+f_kf_{\bj}\delta_{il}+f_if_{\bl}\delta_{kj}+f_kf_{\bl}\delta_{ij})\notag\\
&-\dfrac{(p-1)^2+s^2}{2}f_rf_{\bar{r}}(\delta_{kl}\delta_{ij}+\delta_{jk}\delta_{il}))\label{h22}
\end{align}

\section{Isosceles Hopf manifolds}
In this section, we apply the formulae in section 4 to study Hermitian metrics which are conformal to the standard Hopf metric and have pointwise constant holomorphic sectional curvature with respect to $\nabla^t$ or $D^t_s$ on isosceles Hopf manifolds. Recall that an isosceles Hopf manifold $M_{\sigma}$ (\cite{CZ}) is the quotient of $\mathbb C^n\setminus \{0\}$ by the infinite cyclic group $\mathbb Z$ generated by $$\sigma: (z_1,\cdots, z_n)\longrightarrow (a_1z_1,\cdots, a_nz_n)$$
where $0<a=|a_1|=\cdots=|a_n|<1$. Denote $\omega_0=\sqrt{-1}(dz_1\wedge d\bar{z}_1+\cdots+dz_n\wedge d\bar{z}_n)$ to be the K\"ahler form of the standard Euclidean metric $g_0$ on $\mathbb C^n$ and $\omega_h=\dfrac{\sqrt{-1}}{|z|^2}(dz_1\wedge d\bar{z}_1+\cdots+dz_n\wedge d\bar{z}_n)$, where $|z|^2={}^{t}\bar{z}z=|z_1|^2+\cdots+|z_n|^2$ with $^tz=(z_1,\cdots, z_n)$ ($z$ is viewed as a column vector). As $\omega_h$ is invariant under $\sigma$, it induces a Hermitian metric on $M_{\sigma}$ which is called the \textbf{standard} Hopf metric. 

In \cite{CZ}, Chen-Zheng study Hermitian metrics which are conformal to $\omega_h$ and have pointwise constant Strominger holomorphic sectional curvature and prove the following result (Proposition 1 there).
\begin{prop}[\cite{CZ}] \label{prop4.1}
Let $M_{\sigma}$ be an isosceles Hopf manifold. A Hermitian metric $\wtl{\omega}$ on $M_{\sigma}$ conformal to $\omega_h$ has pointwise constant Strominger holomorphic sectional curvature if and only if  $\wtl{\omega}=\dfrac{c_0}{|z|^2+ ^tzAz+\overline{^tzAz}}\omega_0$ for some constant $c_0>0$ and symmetric $(n\times n)$ complex matrix $A$ satisfying $A\bar{A}<\dfrac{1}{4}I_n$ and $D_\sigma AD_\sigma=a^2A$, where $D_\sigma=diag\{a_1,\cdots, a_n\}$. $\wtl{\omega}$ is then called an \textbf{admissible} metric. \end{prop}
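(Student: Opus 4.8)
The plan is to reduce everything to a conformal change from the flat metric. Since $\om_h=\frac{1}{|z|^2}\om_0$ is conformal to the Euclidean metric $\om_0$, any $\wtl\om$ conformal to $\om_h$ is conformal to $\om_0$; I would write $\wtl\om=e^{2f}\om_0$ and set $u=e^{-2f}$, so that $\wtl\om=\frac1u\om_0$. Working in the global unitary frame $e_i=\p$ (in which $g_0$ is flat K\"ahler, so every canonical connection vanishes and all covariant derivatives are ordinary partials), I would feed $g=g_0$ and $t=-1$ into the conformal-change formula (\ref{h11}). Because the base curvature vanishes and, crucially, $(1-t)^2=4$ at $t=-1$, the Hessian terms and the gradient-square terms combine, through the elementary identities $f_i=-\frac{u_i}{2u}$, $\ f_{i\bj}-2f_if_{\bj}=-\frac{u_{i\bj}}{2u}$, $\ f_rf_{\bar r}=\frac{|\na u|^2}{4u^2}$, into the clean identity
\begin{align*}
e^{2f}\,\widehat{\wtl{R}^{-1}}_{k\bl i\bj}=\frac{1}{4u}\big(u_{i\bl}\de_{jk}+u_{k\bl}\de_{ij}+u_{i\bj}\de_{kl}+u_{k\bj}\de_{il}\big)-\frac{|\na u|^2}{2u^2}\big(\de_{kj}\de_{il}+\de_{ij}\de_{kl}\big).
\end{align*}

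Next I would impose pointwise constant Strominger holomorphic sectional curvature. By Lemma \ref{lem2.03} this is equivalent to $\widehat{\wtl{R}^{-1}}_{k\bl i\bj}=\frac{\wtl c}{2}(\de_{kl}\de_{ij}+\de_{kj}\de_{il})$ in every unitary frame, for a function $\wtl c$ on $M$. Comparing this with the identity above index by index is the combinatorial heart of the argument: taking $i=1,j=2,k=l=2$ forces $u_{1\bar2}=0$, and in general $u_{i\bj}=0$ for $i\neq j$, while $i=j=k=l=m$ shows all the diagonal entries $u_{m\bar m}$ agree, so $u_{i\bj}=\mu\,\de_{ij}$ for some function $\mu$ (this is where $n\ge2$ enters). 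Differentiating $u_{i\bj}=\mu\,\de_{ij}$ and using that ordinary partials commute gives $\mu_{\bk}\de_{ij}=\mu_{\bj}\de_{ik}$; choosing $i=j\neq k$ yields $\mu_{\bk}=0$, and conjugating shows $\mu$ is simultaneously holomorphic and antiholomorphic, hence a real constant.

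It then remains to solve $\sdd u=\mu\,\sdd|z|^2$ under the two side conditions coming from the geometry. First, $u-\mu|z|^2$ is real pluriharmonic on $\mathbb C^n\setminus\{0\}$; since $n\ge2$ this set is simply connected with vanishing $H^1$, so a global holomorphic conjugate exists, and by Hartogs extension the resulting holomorphic $H$ is entire, giving $u=\mu|z|^2+H+\ov H$. Second, $\wtl\om$ descends to $M_\sigma$ exactly when $u\circ\sigma=a^2u$ (using $\sigma^*\om_0=a^2\om_0$); expanding $H$ into homogeneous pieces and matching degrees in $H\circ\sigma-a^2H\in\so\mathbb R$, the scaling factor $a^{|\al|}=a^2$ with $0<a<1$ annihilates every homogeneous component except the quadratic one (the constant term drops out of $u$), forcing $H={}^tzAz$ with $A$ symmetric and $D_\sigma AD_\sigma=a^2A$. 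Positivity $u>0$ then rules out $\mu\le0$, and after setting $c_0=1/\mu$ and absorbing $\mu$ into $A$ I would obtain the stated normal form. The converse is a direct verification: substituting $u_{i\bj}=\mu\,\de_{ij}$ back into the displayed identity collapses it to $\widehat{\wtl{R}^{-1}}_{k\bl i\bj}=\frac{\wtl c}{2}(\de_{kl}\de_{ij}+\de_{kj}\de_{il})$ with $\wtl c=\mu-\frac{|\na u|^2}{u}$, i.e.\ pointwise constant curvature, while the matrix conditions guarantee $u>0$ and $u\circ\sigma=a^2u$.

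I expect two places to require the most care. The first is the index bookkeeping that turns the tensor identity into $u_{i\bj}=\mu\,\de_{ij}$ together with the constancy of $\mu$, which genuinely needs $n\ge2$. The second, and the real obstacle, is matching the analytic positivity $u>0$ with the sharp algebraic inequality $A\ov A<\frac14 I_n$: here I would use that, for symmetric $A$, the Takagi factorization gives $\min_{|z|=1}\big({}^tzAz+\ov{{}^tzAz}\big)=-2\,\sigma_{\max}(A)$ with $\sigma_{\max}(A)^2=\la_{\max}(A\ov A)$, so $u>0$ on $z\neq0$ is equivalent to $\sigma_{\max}(A)<\tfrac12$, i.e.\ $A\ov A<\tfrac14 I_n$.
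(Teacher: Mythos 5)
Your argument is correct and follows essentially the same route as the paper's Section 5 proof (of Proposition \ref{prop4.2}, specialized to $t=-1$): your $u=e^{-2f}$ is exactly the paper's $\xi=e^{\mu f}$ with $\mu=-2$, and the conformal-change identity, the index bookkeeping yielding $u_{i\bar{j}}=\mu\,\delta_{ij}$ with $\mu$ constant, and the pluriharmonic decomposition plus the $\sigma$-equivariance analysis are the same steps in the same order. The only difference is that you carry out in full the normal-form details the paper delegates to page 8 of \cite{CZ} (Hartogs extension and homogeneous-degree matching, ruling out $\mu\le 0$ by positivity, and the Takagi-factorization equivalence of $u>0$ with $A\bar{A}<\frac{1}{4}I_n$), and those details check out.
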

In the following, we will generalize Proposition \ref{prop4.1} to the case for $\nabla^t$ and $D^t_s$. Assume that $\wtl{g}$ is a Hermitian metric which is conformal to the standard Hopf metric. Write $\wtl{\omega}$ for the K\"ahler form of $\wtl{g}$ such that $\wtl{\omega}=F\omega_h=e^{2f}\omega_0$, where $e^{2f}=\dfrac{F}{|z|^2}$ and $F$ is a positive function on $M_{\sigma}$. Take $\mathbb C^n\setminus \{0\}$ as a global parameter space of $M_{\sigma}$ and view $F$ and $f$ as functions on $\mathbb C^n\setminus \{0\}$. Then $\wtl{g}$ is conformal to the flat metric $g_0$ with the conformal factor $e^{2f}$. As $\sigma^*|z|^2=a^2|z|^2$ and $\sigma^*F=F$, we get $\sigma^*(e^{2f})=a^{-2}e^{2f}$. Now in the notation of section 4, as $g=g_0$ is flat, $R^t=0$ and $T=0$. Let $\wtl{R}^t$ and $\widehat{\wtl{R}^t}$ be the curvature and its symmetrization of the $t$-Gauduchon connection of $\wtl{g}$. Choose $e_i=\frac{\pa}{\pa z_i}$, $1\leq i\leq n$. By (\ref{h11}), we have
\begin{align}  
e^{2f}\widehat{\wtl{R}^t}_{k\bar{l}i\bar{j}}=&-\dfrac{1}{2}(f_{i\bl}\delta_{jk}+f_{k\bl}\delta_{ij}+f_{i\bj}\delta_{kl}+f_{k\bj}\delta_{il}) \notag \\
&+\dfrac{(1-t)^2}{4}(f_if_{\bj}\delta_{kl}+f_kf_{\bj}\delta_{il}+f_if_{\bl}\delta_{kj}+f_kf_{\bl}\delta_{ij}).\notag \\
&- \dfrac{(1-t)^2}{2}f_rf_{\bar{r}}(\delta_{kj}\de_{il}+\de_{ij}\de_{kl}),\label{h111}
\end{align}
where $f_r, f_{i\bar{j}}$ are now ordinary partial derivatives. Assume that $\wtl{g}$ has pointwise constant holomorphic sectional curvature $\wtl{c}$ with respect to $\wtl{\nabla}^t$, for some $t\in \mathbb R$. By Lemma \ref{lem2.03}, \begin{align}\label{5.1}
\widehat{\wtl{R}^t}_{k\bar{l}i\bar{j}}=\dfrac{\wtl{c}}{2}(\delta_{ij}\delta_{kl}+\delta_{il}\delta_{kj}).\end{align}
Putting (\ref{5.1}) into (\ref{h111}), we have
\begin{align}  
(\dfrac{\wtl{c}}{2}e^{2f}+\dfrac{(1-t)^2}{2}f_rf_{\bar{r}})(\delta_{ij}\delta_{kl}+\delta_{il}\delta_{kj})=-\dfrac{1}{2}(f_{i\bl}\delta_{jk}+f_{k\bl}\delta_{ij}+f_{i\bj}\delta_{kl}+f_{k\bj}\delta_{il}) \notag \\
+\dfrac{(1-t)^2}{4}(f_if_{\bj}\delta_{kl}+f_kf_{\bj}\delta_{il}+f_if_{\bl}\delta_{kj}+f_kf_{\bl}\delta_{ij}).\label{h112} 
\end{align}
We first argue that $t\neq 1$. Otherwise, if $t=1$, then (\ref{h112}) becomes 
\begin{align}  
-\wtl{c}e^{2f}(\delta_{ij}\delta_{kl}+\delta_{il}\delta_{kj})=f_{i\bl}\delta_{jk}+f_{k\bl}\delta_{ij}+f_{i\bj}\delta_{kl}+f_{k\bj}\delta_{il}.\label{h113} 
\end{align}
When $n\geq 2$, by testing different indices in (\ref{h113}), we get 
\begin{align}
f_{i\bj}=-\dfrac{\wtl{c}e^{2f}}{2}\de_{\ij}.\label{h114}
\end{align}
For $i\neq j$, $f_{i\bi\bj}=f_{i\bj\bi}=0$. So $\frac{\pa }{\pa \bar{z}_j}(\wtl{c}e^{2f})=0$. As $\wtl{c}e^{2f}$ is a real function, we get that $\wtl{c}e^{2f}$ is a constant. But this contradicts with $\sigma^*\wtl{c}=\wtl{c}$ and $\sigma^*e^{2f}=a^{-2}e^{2f}$. So $t\neq 1$ and $\mu=-\dfrac{(1-t)^2}{2}\neq 0$. Let $\xi=e^{\mu f}$. Then
\begin{align}\xi_i=\mu\xi f_i, \ \ \ \ \xi_{i\bj}=\mu \xi(f_{i\bj}+\mu f_if_{\bj}). \label{h33}
\end{align}
Putting (\ref{h33}) into (\ref{h112}), we get
\begin{align}  
(-\mu\xi e^{2f}\wtl{c}+\dfrac{2}{\xi}\xi_r\xi_{\bar{r}})(\delta_{kj}\de_{il}+\de_{ij}\de_{kl})=\xi_{i\bl}\delta_{jk}+\xi_{k\bl}\delta_{ij}+\xi_{i\bj}\delta_{kl}+\xi_{k\bj}\delta_{il}. \label{5.2}
\end{align}
Denote $\la=-\dfrac{1}{2}(\mu\xi e^{2f}\wtl{c}-\dfrac{2}{\xi}\xi_r\xi_{\bar{r}})$. By testing different indices in (\ref{5.2}), we get 
\begin{align}
\xi_{i\bj}=\la\de_{\ij}.\label{5.3}
\end{align}
Then for $i\neq j$, $\la_{\bj}=\xi_{i\bi \bj}=\xi_{i\bj\bi}=0$. So $\la$ is a constant. From (\ref{5.3}), we get that $\pa\bar{\pa}(\xi-\la|z|^2)=0$. Then we may write $$\xi=\la|z|^2+\vp+\ov{\vp}$$ for some holomorphic function $\vp$ on $\mathbb C^n\setminus\{0\}$. As $\sigma^*(e^{2f})=a^{-2}e^{2f}$ and $\xi=e^{\mu f}$, we have $\sigma^*\xi=a^{-\mu}\xi$. It follows that
\begin{align*}
\sigma^*(\vp+\ov{\vp})-a^{-\mu}(\vp+\ov{\vp})=\la (a^{-\mu}-a^2)|z|^2.
\end{align*}
As $\vp$ is holomorphic, $\pa\bar{\pa}(\la(a^{-\mu}-a^2)|z|^2)=0$.  Therefore 
\begin{align}
\la (a^{-\mu}-a^2)=0. \label{5.01}
\end{align}
We then divide the discussion into two cases.  \\
(i) If $t\neq -1$ and $t\neq 3$, then $\mu=-\dfrac{(1-t)^2}{2}\neq -2$. By (\ref{5.01}), we have $\la=0$ .
Thus $\vp+\ov{\vp}=\xi=e^{\mu f}>0$, i.e., $\vp$ has positive real part on $\mathbb C^n\setminus\{0\}$. When $n\geq 2$,  by Hartogs extension, $\vp$  is holomorphic on $\mathbb C^n$. As $\sigma^*\vp=a^{-\mu}\vp$. Expanding $\vp$ in Taylor series, we see that $\vp$ is a homogeneous polynomial in $z$ with degree $-\mu$. This is impossible since $\vp$ has positive real part on $\mathbb C^n\setminus\{0\}$. \\
(ii) $t=-1$ or $t=3$, then $\mu=-2$. In this case, $\xi=e^{-2f}$ and (\ref{h111}) becomes
\begin{align}  
\widehat{\wtl{R}^t}_{k\bar{l}i\bar{j}}=&\dfrac{1}{4}(\xi_{i\bl}\delta_{jk}+\xi_{k\bl}\delta_{ij}+\xi_{i\bj}\delta_{kl}+\xi_{k\bj}\delta_{il})-\dfrac{1}{2\xi}\xi_r\xi_{\bar{r}}(\delta_{kj}\de_{il}+\de_{ij}\de_{kl}).\label{h231}
\end{align}
If $\xi$ satisfies $\xi_{i\bj}=\la\de_{\ij}$ for some positive constant $\la$ and $\sigma^*\xi=a^2\xi$, then by (\ref{h231}), $\wtl{g}=\dfrac{1}{\xi}g_0$ would define a Hermitian metric on $M_{\sigma}$ and has pointwise constant holomorphic sectional curvature $\la-\dfrac{1}{\xi}\xi_r\xi_{\bar{r}}$. This is exactly in the same situation of the Strominger connection case (see Page 8 in \cite{CZ}), which gives that $\xi=\la(|z|^2+{}^{t}zAz+\overline{{}^{t}zAz})$. Here $A$ is a symmetric $(n\times n)$ complex matrix satisfying $A\bar{A}<\dfrac{1}{4}I_n$ and $D_\sigma AD_\sigma=a^2A$ with $D_\sigma=diag\{a_1,\cdots, a_n\}$. In other words, $\dfrac{1}{\xi}g_0$ is an admissible metric on $M_{\sigma}$. 
In summary, we have proved the following result which generalizes Proposition 5.1.
\begin{prop} \label{prop4.2}
Let $M_{\sigma}$ be an isosceles Hopf manifold. A Hermitian metric $\wtl{\omega}$ on $M_{\sigma}$ conformal to $\omega_h$ has pointwise constant holomorphic sectional curvature with respect to $\wtl{\nabla}^t$ if and only if $t=-1$ or $t=3$ and $\wtl{\omega}$ is an admissible metric. \end{prop}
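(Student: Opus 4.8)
The plan is to prove Proposition \ref{prop4.2} by computing the holomorphic sectional curvature of a conformally-Hopf metric using the conformal transformation formula \eqref{h11} established in section 4, and then solving the resulting PDE for the conformal factor. Concretely, set $g = g_0$ flat (so $R^t = 0$ and $T = 0$), write $\wtl{g} = e^{2f} g_0$, and recall the constraint $\sigma^*(e^{2f}) = a^{-2} e^{2f}$ coming from $\sigma^* |z|^2 = a^2 |z|^2$ and $\sigma^* F = F$. The forward direction (admissible $\Rightarrow$ pointwise constant) is essentially a substitution: plugging an admissible $\wtl{\omega}$ into \eqref{h231} and verifying the Berger-type condition of Lemma \ref{lem2.03}. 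The substantive direction is the converse.

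First I would record the symmetrized curvature of $\wtl{\nabla}^t$ in terms of $f$ via \eqref{h111}, then impose pointwise constancy through Lemma \ref{lem2.03} to obtain the master equation \eqref{h112}. The next step is to rule out $t=1$ (the Chern case), which is done by a short argument: when $t=1$ the second-order terms drop out and \eqref{h113} forces $f_{i\bar j} = -\tfrac{1}{2}\wtl{c}\,e^{2f}\delta_{ij}$, whence differentiating gives $\wtl{c}\,e^{2f}$ constant, contradicting the equivariance $\sigma^*(e^{2f}) = a^{-2} e^{2f}$. Having secured $t \ne 1$, I set $\mu = -\tfrac{(1-t)^2}{2} \ne 0$ and introduce the substitution $\xi = e^{\mu f}$, whose point is to \emph{linearize} the quadratic gradient terms: by \eqref{h33} the equation \eqref{h112} collapses to the complex-Hessian equation $\xi_{i\bar j} = \lambda\,\delta_{ij}$ \eqref{5.3}. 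Testing distinct indices shows $\lambda$ is constant, and integrating $\partial\bar\partial(\xi - \lambda|z|^2) = 0$ yields $\xi = \lambda |z|^2 + \vp + \ov{\vp}$ with $\vp$ holomorphic on $\mathbb{C}^n \setminus \{0\}$.

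The decisive step is the dichotomy coming from the equivariance $\sigma^*\xi = a^{-\mu}\xi$, which forces $\lambda(a^{-\mu} - a^2) = 0$ as in \eqref{5.01}. I expect \textbf{this} to be the main obstacle, since everything hinges on separating the two cases cleanly. When $\mu \ne -2$ (i.e. $t \ne -1$ and $t \ne 3$) one gets $\lambda = 0$, so $\xi = \vp + \ov{\vp} = e^{\mu f} > 0$; Hartogs extension (using $n \ge 2$) makes $\vp$ entire, and the homogeneity $\sigma^*\vp = a^{-\mu}\vp$ forces $\vp$ to be a homogeneous polynomial of degree $-\mu$, which cannot have everywhere-positive real part on $\mathbb{C}^n \setminus \{0\}$ --- a contradiction ruling out these values of $t$. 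When $\mu = -2$, i.e. $t = -1$ or $t = 3$, we have $\xi = e^{-2f}$ so that $\wtl{g} = \tfrac{1}{\xi} g_0$, and \eqref{h231} reduces the problem to exactly the Strominger-connection situation already analyzed in \cite{CZ}. I would then simply invoke that analysis: $\xi = \lambda(|z|^2 + {}^t z A z + \overline{{}^t z A z})$ with $A$ symmetric, $A\bar A < \tfrac{1}{4} I_n$, and $D_\sigma A D_\sigma = a^2 A$, which is precisely the admissibility condition of Proposition \ref{prop4.1}. This closes both directions and completes the proof.
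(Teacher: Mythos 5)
Your proposal is correct and follows essentially the same route as the paper: the conformal curvature formula specialized to the flat metric, the exclusion of $t=1$ via equivariance, the substitution $\xi=e^{\mu f}$ linearizing to $\xi_{i\bar j}=\lambda\delta_{ij}$, the dichotomy $\lambda(a^{-\mu}-a^2)=0$ with the Hartogs/homogeneous-polynomial contradiction when $\mu\neq -2$, and the reduction to the Strominger analysis of \cite{CZ} when $\mu=-2$. No substantive differences to report.
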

Note that from the above calculations, for an admissible metric $\dfrac{1}{\xi_A}\omega_0$ with $\xi_A=|z|^2+{}^{t}zAz+\overline{{}^{t}zAz}$, the holomorphic sectional curvatures with respect to $\wtl{\nabla}^t$ for $t=-1$ and $t=3$ are both $-\dfrac{1}{\xi_A}(4\ {}^{t}zA\bar{A}\bar{z}+{}^{t}zAz+\overline{^tzAz})$. If $A\neq 0$, it is impossible to be a constant (c.f. \cite{CZ}).

Using the same techniques, we also derive the following more general result for the two-parameter connection $\wtl{D}^t_s$ whose proof is omitted.
\begin{prop} \label{prop4.3}
A Hermitian metric $\wtl{\omega}$ on $M_{\sigma}$ which is conformal to $\omega_h$ has pointwise constant holomorphic sectional curvature with respect to $\wtl{D}^t_s$ if and only if $(1-t+ts)^2+s^2=4$ and $\wtl{\omega}$ is an admissible metric. 
In this case, the holomorphic sectional curvatures of $\wtl{D}^t_s$ is $-\dfrac{1}{\xi_A}(4\ {}^{t}zA\bar{A}\bar{z}+{}^{t}zAz+\overline{{}^{t}zAz})$ up to a scalar. \end{prop}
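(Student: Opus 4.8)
The plan is to run the argument of Proposition~\ref{prop4.2} essentially verbatim, after observing that the two-parameter conformal-change formula specializes, on a flat base, to exactly the one-parameter formula with a single coefficient replaced. Writing $p=t-ts$ and $\kappa=(p-1)^2+s^2$, and noting that $p-1=-(1-t+ts)$ so that $\kappa=(1-t+ts)^2+s^2$, I would apply (\ref{h22}) to the Euclidean metric $g_0$ on the global parameter space $\mathbb C^n\setminus\{0\}$. Since $g_0$ is flat we have $R^D=0$ and $T\equiv 0$, so all torsion terms drop out and (\ref{h22}) becomes precisely (\ref{h111}) with $(1-t)^2$ replaced throughout by $\kappa$. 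Consequently the critical value $(1-t)^2=4$ of the Gauduchon case is replaced by $\kappa=4$, i.e.\ $(1-t+ts)^2+s^2=4$.

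Assuming $\wtl\omega=e^{2f}\omega_0$ has pointwise constant holomorphic sectional curvature $\wtl c$ with respect to $\wtl D^t_s$, Lemma~\ref{lem2.03} gives $\widehat{\wtl R^D}_{k\bar l i\bar j}=\tfrac{\wtl c}{2}(\delta_{ij}\delta_{kl}+\delta_{il}\delta_{kj})$. I first rule out $\kappa=0$: this forces $p=1$ and $s=0$, the Chern connection, and combining the vanishing-coefficient form of (\ref{h22}) with Lemma~\ref{lem2.03} yields $f_{i\bar j}=-\tfrac{\wtl c}{2}e^{2f}\delta_{ij}$, whence $\wtl c\,e^{2f}$ is constant, contradicting $\sigma^*(e^{2f})=a^{-2}e^{2f}$ and $\sigma^*\wtl c=\wtl c$. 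Hence $\mu:=-\kappa/2\neq 0$, and I set $\xi=e^{\mu f}$. Substituting $\xi_i=\mu\xi f_i$ and $\xi_{i\bar j}=\mu\xi(f_{i\bar j}+\mu f_if_{\bar j})$ into the constant-curvature equation reduces it, exactly as in (\ref{5.2})--(\ref{5.3}), to $\xi_{i\bar j}=\la\delta_{ij}$ with $\la$ forced to be constant by differentiating the off-diagonal relations; thus $\xi=\la|z|^2+\vp+\ov\vp$ for a holomorphic function $\vp$, and the scaling $\sigma^*\xi=a^{-\mu}\xi$ yields $\la(a^{-\mu}-a^2)=0$.

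The proof then splits on the value of $\kappa$. If $\kappa\neq 4$ then $\mu\neq-2$, so $a^{-\mu}\neq a^2$ (as $0<a<1$) and $\la=0$; thus $\xi=\vp+\ov\vp>0$ with $\sigma^*\vp=a^{-\mu}\vp$, and the Hartogs extension together with the homogeneity argument shows $\vp$ must be a homogeneous polynomial of positive degree $-\mu=\kappa/2$, which cannot have positive real part on $\mathbb C^n\setminus\{0\}$ --- a contradiction. If $\kappa=4$ then $\mu=-2$, $\xi=e^{-2f}$, and (\ref{h22}) reduces to the Strominger-connection formula (\ref{h231}); the resulting equation $\xi_{i\bar j}=\la\delta_{ij}$ with $\la>0$ and $\sigma^*\xi=a^2\xi$ is identical to the $\nabla^s$ case of \cite{CZ}, giving $\xi=\la(|z|^2+{}^tzAz+\overline{{}^tzAz})$ with $A\bar A<\tfrac14 I_n$ and $D_\sigma A D_\sigma=a^2A$, i.e.\ the admissible metrics, and hence the stated curvature formula up to a scalar.

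The computation is almost mechanical once the structural identity of the first paragraph is in hand, so I expect no serious obstacle beyond careful bookkeeping; the one place to be attentive is confirming that on the flat base the extra $s^2$-torsion terms carried by Lemma~\ref{lem2.02} and (\ref{4.33}) genuinely vanish, so that the entire $(t,s)$-dependence of the symmetrized curvature enters only through the scalar $\kappa=(1-t+ts)^2+s^2$. The only substantive geometric input is the classification in the case $\kappa=4$, namely that $\xi_{i\bar j}=\la\delta_{ij}$ with $\la>0$ and $\sigma^*\xi=a^2\xi$ forces $\xi$ into the admissible form; this is precisely the content already established for the Strominger connection in \cite{CZ} and can be quoted directly, which is why the paper omits the remaining details.
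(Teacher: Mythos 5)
Your proposal is correct and is exactly the proof the paper intends but omits: the paper justifies Proposition \ref{prop4.3} by saying it follows from ``the same techniques'' as Proposition \ref{prop4.2}, which is precisely your structural observation that on the flat background $g_0$ the two-parameter formula (\ref{h22}) coincides with (\ref{h111}) after replacing $(1-t)^2$ by $\kappa=(1-t+ts)^2+s^2$, so the one-parameter argument (excluding $\kappa=0$, i.e.\ the Chern case $(t,s)=(1,0)$; substituting $\xi=e^{\mu f}$ with $\mu=-\kappa/2$; deriving $\xi_{i\bar{j}}=\lambda\delta_{ij}$ and $\lambda(a^{-\mu}-a^2)=0$; and quoting the Strominger-case classification from \cite{CZ} when $\kappa=4$) carries over verbatim. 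I see no gaps, including in the two points you flag: the $s^2$-torsion terms indeed drop out since (\ref{h22}) is already the torsion-free (K\"ahler) specialization, and $\mu<0$ whenever $\kappa\neq 0$, so the homogeneity degree $-\mu$ remains positive as required for the contradiction in the $\kappa\neq 4$ case.
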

When $n=2$, $M_{\sigma}$ is an isosceles Hopf surface. Let $A=0$ and $\xi_A=|z|^2$. Then $\wtl{\omega}=\dfrac{1}{\xi_A}\omega_0=\omega_h$. So the standard Hopf metric has zero holomorphic sectional curvature for $\wtl{\nabla}^s$ and $\wtl{\nabla}^3$ (or generally $\wtl{D}^t_s$ with $(1-t+ts)^2+s^2=4$). Putting $e^{-2f}=|z|^2$ into (\ref{h000}), direct calculation gives that the curvature for $\wtl{\nabla}^s$ is zero but nonzero for $\wtl{\nabla}^3$. For example, when $t=3$, we have $\wtl{R}^t_{k\bar{l}i\bar{j}}=3(\delta_{kl}\delta_{ij}-\delta_{il}\delta_{jk})+\dfrac{1}{|z|^2} (\bar{z}_kz_j\delta_{il}+\bar{z}_iz_l\delta_{jk}+\bar{z}_iz_j\delta_{kl}-3\bar{z}_kz_l\delta_{ij})$. Then $\wtl{R}^t_{1\bar{1}2\bar{2}}\neq 0$ for $t=3$. Therefore, we have
\begin{cor} \label{cor5.4}
There exist non-flat compact Hermitian surfaces with zero Gauduchon holomorphic sectional curvature.
\end{cor}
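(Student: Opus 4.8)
The plan is to produce an explicit example on an isosceles Hopf surface. Take $n=2$ and let $M_\sigma$ be an isosceles Hopf surface equipped with the standard Hopf metric $\om_h$, which is a compact Hermitian surface. In the notation of Proposition \ref{prop4.2}, choosing the matrix $A=0$ gives $\xi_A=|z|^2$, so that $\wtl{\om}=\frac{1}{\xi_A}\om_0=\om_h$ is the admissible metric associated to $A=0$. Since $A=0$, the holomorphic sectional curvature formula $-\frac{1}{\xi_A}(4\,{}^{t}zA\bar{A}\bar{z}+{}^{t}zAz+\overline{{}^{t}zAz})$ reduces identically to $0$. Hence by Proposition \ref{prop4.2} (or Proposition \ref{prop4.3} with $t=3,\ s=0$, for which $(1-t+ts)^2+s^2=4$) the standard Hopf metric has pointwise constant, and in fact identically zero, holomorphic sectional curvature with respect to the Gauduchon connection $\wtl{\na}^3$.

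It then remains to show that the full $\wtl{\na}^3$-curvature tensor does not vanish, so that the surface is genuinely non-flat. Starting from the flat Euclidean metric $g_0$ on $\mathbb C^2$, for which $R^t=0$ and $T=0$, I substitute $e^{-2f}=|z|^2$ (equivalently $f=-\frac12\log|z|^2$) into the conformal transformation formula (\ref{h000}). Because $g_0$ is flat, the covariant derivatives of $f$ with respect to $\na^t$ reduce to ordinary partial derivatives, and a direct computation of $f_i$ and $f_{i\bj}$ yields
\begin{align*}
\wtl{R}^3_{k\bl i\bj}=3(\de_{kl}\de_{ij}-\de_{il}\de_{jk})+\frac{1}{|z|^2}(\bz_kz_j\de_{il}+\bz_iz_l\de_{jk}+\bz_iz_j\de_{kl}-3\bz_kz_l\de_{ij}).
\end{align*}
Reading off the component with $k=i=1$, $l=j=2$ gives $\wtl{R}^3_{1\bar{1}2\bar{2}}=3\neq 0$, so the curvature tensor of $\wtl{\na}^3$ is nonvanishing despite the holomorphic sectional curvature being identically zero. (By contrast, the same substitution for $t=-1$ produces a vanishing tensor, consistent with the standard Hopf metric being Strominger-flat.)

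Combining the two steps, $(M_\sigma,\om_h)$ with $n=2$ is a compact Hermitian surface whose $\na^3$-holomorphic sectional curvature vanishes identically while its $\na^3$-curvature tensor is nonzero; in particular it is not flat, which establishes the corollary. The only real work lies in the second step: one must carefully compute the partial derivatives of $f=-\frac12\log|z|^2$, correctly symmetrize the terms in (\ref{h000}), and verify that the components lying outside the holomorphic-sectional part survive. The algebra is routine but must be carried out with care to confirm that the example is non-flat rather than merely of zero holomorphic sectional curvature.
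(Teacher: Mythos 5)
Your proposal follows essentially the same route as the paper: the standard Hopf metric realized as the admissible metric with $A=0$, zero holomorphic sectional curvature for $\wtl{\na}^3$ via Proposition \ref{prop4.2} (equivalently Proposition \ref{prop4.3} with $(t,s)=(3,0)$), and nonvanishing of the $\wtl{\na}^3$-curvature tensor by substituting $e^{-2f}=|z|^2$ into (\ref{h000}), arriving at the same displayed formula for $\wtl{R}^3_{k\bar{l}i\bar{j}}$. One slip in your final read-off: the component $\wtl{R}^3_{1\bar{1}2\bar{2}}$ corresponds to $k=l=1$, $i=j=2$ (not $k=i=1$, $l=j=2$, which gives $\wtl{R}^3_{1\bar{2}1\bar{2}}$, identically zero), and its value is $3+\frac{|z_2|^2-3|z_1|^2}{|z|^2}=\frac{4|z_2|^2}{|z|^2}$ rather than the constant $3$; since this is nonzero off the locus $\{z_2=0\}$, the conclusion that the curvature tensor of $\wtl{\na}^3$ does not vanish identically, and hence the corollary, still stands.
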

Recall that for a K\"ahler manifold, the holomorphic sectional curvature determines the whole curvature tensor. In particular, a K\"ahler metric with zero holomorphic sectional curvature must be flat. The above example shows that it does not hold for compact Hermitian manifolds with respect to general Gauduchon connections even in dimension two. 

\section{Proof of the theorems}
In this section, we prove Theorem \ref{thm0}, Theorem \ref{thm2} and Corollary \ref{cor1.8}. As Theorem \ref{thm2} is a generalization of Theorem \ref{thm0}, it suffices to prove Theorem \ref{thm2}. Using the notations in section 4, assume that $(M,\wtl{g})$ is a compact Hermitian surface with pointwise constant holomorphic sectional curvature with respect to $\wtl{D}^t_s$ for some $(t,s)\in \mathbb R^2$. By Proposition \ref{prop2.1}, $(M, \wtl{g})$ is self-dual. It follows from Theorem $1^{\prime}$ in \cite{ADM} that $\wtl{g}$ must be conformal to one of the following metrics: (1) a K\"ahler meric $g$ with constant holomorphic sectional curvature; (2) a non-flat K\"ahler meric $g$ which has local K\"ahler form $2\sqrt{-1}\dfrac{dz_1\wedge d\bar{z}_1}{(1-|z_1|^2)^2}+2\sqrt{-1}\dfrac{dz_2\wedge d\bar{z}_2}{(1+|z_2|^2)^2}$; (3) the standard Hopf metric $\omega_h$ on an isosceles Hopf surface (see also \cite{CZ}).

With the computation formulae in section 4, we first prove that in case (1) $\wtl{g}$ must be a constant multiple of $g$. Assume $\wtl{g}=e^{2f}g$. As $\wtl{g}$ has pointwise constant holomorphic sectional curvature $\wtl{c}$ with respect to $\wtl{D}^t_s$ and $g$ has constant holomorphic sectional curvature $c$, by (\ref{h22}), 
\begin{align}(e^{2f}\dfrac{\wtl{c}}{2}-\dfrac{c}{2})(\delta_{kl}\delta_{ij}+\delta_{jk}\delta_{il})=&-\dfrac{1}{2}(f_{i\bl}\delta_{jk}+f_{k\bl}\delta_{ij}+f_{i\bj}\delta_{kl}+f_{k\bj}\delta_{il})\notag\\
&+\dfrac{(p-1)^2+s^2}{4}(f_if_{\bj}\delta_{kl}+f_kf_{\bj}\delta_{il}+f_if_{\bl}\delta_{kj}+f_kf_{\bl}\delta_{ij})\notag\\
&-\dfrac{(p-1)^2+s^2}{2}f_rf_{\bar{r}}(\delta_{kl}\delta_{ij}+\delta_{jk}\delta_{il})\label{h224}.
\end{align}
Let $\xi=e^{\mu f}$ with $\mu=-\dfrac{(p-1)^2+s^2}{2}$ and $\la=-\dfrac{1}{2}(\mu\xi (e^{2f}\wtl{c}-c)-\dfrac{2}{\xi}\xi_r\xi_{\bar{r}})$. Then (\ref{h224}) becomes \begin{align*}
2\la(\delta_{kl}\delta_{ij}+\delta_{jk}\delta_{il})=\xi_{i\bl}\delta_{jk}+\xi_{k\bl}\delta_{ij}+\xi_{i\bj}\delta_{kl}+\xi_{k\bj}\delta_{il}.
\end{align*}
By choosing different indices, we get
\begin{align}
\xi_{i\bar{j}}=\la \delta_{ij}.\label{6.1}
\end{align} Since $g$ is K\"ahler ($R_{\bi\bj**}=0$), $\la_{\bj}=\xi_{i\bi \bj}=\xi_{i\bj\bi}=0$. So $\la$ is a constant. Contracting (\ref{6.1}) by $g$, we have $\Delta\xi=2\la$ on $M$. As $M$ is compact, by the maximum principle, $\la=0$ and $\xi$ is a constant. So $\wtl{g}$ is a constant multiple of $g$ and is K\"ahler.

Next, we show that case (2) is not possible, following \cite{CZ}. Denote $g$ to be the K\"ahler metric with local K\"ahler form $\omega=2\sqrt{-1}\dfrac{dz_1\wedge d\bar{z}_1}{(1-|z_1|^2)^2}+2\sqrt{-1}\dfrac{dz_2\wedge d\bar{z}_2}{(1+|z_2|^2)^2}$, where $(z_1,z_2)$ is a local holomorphic coordinate. Then $g$ is the product metric of the Bergman metric on $\mathbb D$ and the Fubini-Study metric on $\mathbb P^1$. Let $\{e_1, e_2\}$ be a unitary frame of $g$ which are parallel to $\frac{\pa}{\pa z_1}, \frac{\pa}{\pa z_2}$ respectively. Then $R_{1\bar{1}1\bar{1}}=-1, R_{2\bar{2}2\bar{2}}=1$ and all other components $R_{i\bar{j}k\bar{l}}$ of $g$ are zero. Assume that $\wtl{g}=e^{2f}g$ for some global function $f$ and $\wtl{g}$ has pointwise constant holomorphic sectional curvature $\wtl{c}$ with respect to $\wtl{D}^t_s$. Letting $(k\bar{l}i\bar{j})$ be $(1\bar{1}1\bar{1}), (2\bar{2}2\bar{2})$ and $(1\bar{1}1\bar{2})$ in (\ref{h22}), we get
\begin{align} e^{2f}\wtl{c}+1+((1-p)^2+s^2)f_rf_{\bar{r}}&=-2f_{1\bar{1}}+((1-p)^2+s^2)f_1f_{\bar{1}},\notag \\
e^{2f}\wtl{c}-1+((1-p)^2+s^2)f_rf_{\bar{r}}&=-2f_{2\bar{2}}+((1-p)^2+s^2)f_2f_{\bar{2}},\notag \\
0&=-2f_{1\bar{2}}+((1-p)^2+s^2)f_1f_{\bar{2}}. \label{6.01}
\end{align}
If $(t,s)=(1,0)$, then $(1-p)^2+s^2=0$. The above equations become 
$$ f_{1\bar{1}}=-\dfrac{1}{2}(e^{2f}\wtl{c}+1),\hspace{.5cm}
f_{2\bar{2}}=-\dfrac{1}{2}(e^{2f}\wtl{c}-1),\hspace{.5cm}
f_{1\bar{2}}=f_{2\bar{1}}=0.$$
As $g$ is K\"ahler, $f_{1\bar{1}\bar{2}}=f_{1\bar{2}\bar{1}}=0, f_{2\bar{2}\bar{1}}=f_{2\bar{1}\bar{2}}=0$. So $\bar{e}_1(e^{2f}\wtl{c})=\bar{e}_2(e^{2f}\wtl{c})=0$. Since $e^{2f}\wtl{c}$ is a global real function,  $e^{2f}\wtl{c}$ must be a constant. Therefore, $\Delta f$ is a constant and by the maximum principle, $f$ is a constant. Then $0=e^{2f}\wtl{c}+1=e^{2f}\wtl{c}-1$, which is a contradiction.\\
If $(t,s)\neq (1,0)$, then $(1-p)^2+s^2\neq 0$. Let $\mu=-\dfrac{(1-p)^2+s^2}{2}$, $\xi=e^{\mu f}$ and $\la=-\dfrac{\mu \xi}{2}(e^{2f}\wtl{c}-2\mu f_rf_{\bar{r}})$. Then (\ref{6.01}) becomes
\begin{align*} \xi_{1\bar{1}}=\la+\dfrac{\mu \xi}{2}, \hspace{1cm} \xi_{2\bar{2}}=\la-\dfrac{\mu \xi}{2},\hspace{1cm} \xi_{1\bar{2}}=\xi_{2\bar{1}}=0.
\end{align*}
Let $\alpha=\la-\dfrac{\mu \xi}{2}, \beta=\la+\dfrac{\mu \xi}{2}$, which are two real global functions. Then $\alpha_{\bar{1}}=0, \beta_{\bar{2}}=0$. So $\alpha$ depends only on $z_2$ and $\beta$ depends on $z_1$. Then $\Delta \alpha=\alpha_{2\bar{2}}=(\beta-\mu \xi)_{2\bar{2}}=-\mu \alpha$.  As $\mu<0$, by the maximum principle, $\alpha=0$. So $\Delta \xi=\xi_{1\bar{1}}+\xi_{2\bar{2}}=\mu \xi$. Then by the maximum principle, the minimum of $\xi$ is nonpositive, which is a contradiction. In conclusion, the case (2) is impossible. 

Case (3) is just contained in the situation of Proposition \ref{prop4.3}. Then we deduce that $(1-t+ts)^2+s^2=4$ and $\wtl{g}$ is an admissible metric. The proof of Theorem \ref{thm2} is finished.

Last, we prove Corollary \ref{cor1.8}. Using the adjusted notation, assume that $\wtl{g}$ has pointwise constant holomorphic sectional curvature with respect to $\wtl{D}^t_s$ for some $(t,s)\in \mathbb R^2$ and is non-K\"ahler. Then by Theorem \ref{thm2}, $\wtl{g}$ must be an admissible metric on an isosceles Hopf surface with $(t,s)$ satisfying $(1-t+ts)^2+s^2=4$. In this case, by Proposition \ref{prop4.3}, the holomorphic sectional curvature is $H=-\dfrac{1}{\xi_A}(4\ {}^{t}zA\bar{A}\bar{z}+{}^{t}zAz+\overline{{}^{t}zAz})$, where $\xi_A=|z|^2+{}^{t}zAz+\overline{{}^{t}zAz}$ and $A$ is symmetric and satisfies $A\bar{A}<\dfrac{1}{4}I_n$. Diagonalizing $A$ with a unitary matrix $U$ (see page 8 in \cite{CZ}), direct calculation shows that $H$ can not be a positive function or a negative function. So Corollary \ref{cor1.8} is proved.


\begin{thebibliography}{99}
\bibitem{AOUV} D. Angella, A. Otal, L. Ugarte, R. Villacampa, On Gauduchon connections with K\"ahler-like curvature,
arXiv:1809.02632v2.
\bibitem{ADM} V. Apostolov, J. Davidov, and O. Mu\v{s}karov, Compact self-dual Hermitian surfaces, Trans. Amer. Math. Soc., 348 (1996), 3051-3063.
\bibitem{B}  A. Balas, Compact Hermitian manifolds of constant holomorphic sectional curvature. Math. Z. 189(1985), no. 2, 193--210.
\bibitem{BG} A. Balas, P. Gauduchon, Any Hermitian metric of constant nonpositive (Hermitian) holomorphic sectional curvature on a compact complex surface is K\"ahler. Math. Z. 190 (1985), no. 1, 39-43.
\bibitem{Be}  A. Besse. Einstein manifolds, volume 10 of Ergebnisse der Mathematik und ihrer Grenzgebiete. Springer-Verlag, 1987.
\bibitem{Boo} W. Boothby, Hermitian manifolds with zero curvature, Michigan Math. J. 5 (1958), no. 2, 229--233.
 \bibitem{CCN} H. Chen, L. Chen, and X. Nie, Chern-Ricci curvatures, holomorphic sectional curvature and Hermitian
metrics, Sci. China Math 64 (2021), 763-780.
\bibitem{CZ} S. Chen and F. Zheng, On Strominger space forms,  arXiv:2111.07108. 
\bibitem{FZ} J-X Fu and X. Zhou, Scalar curvatures in almost Hermitian geometry and some applications, arXiv:
1901.10130.
\bibitem{GK} G. Ganchev and O. Kassabov, Hermitian manifolds with flat associated connections, Kodai Math. J. 29
(2006), 281-298.
\bibitem{G} P. Gauduchon, Hermitian connnections and Dirac operators, Boll. Un. Mat. It. 11-B (1997) Suppl. Fasc.,
257-288.
\bibitem{Ha} N.S. Hawley, Constant holomorphic curvature, Canad. Math. J. 5 (1953), 53--56.
\bibitem{HLY} J. He, K. Liu, X. Yang, Levi-Civita Ricci flat metrics on compact complex manifolds, J. Geom. Anal. volume 30, (2020) 646-666.
\bibitem{KYZ} G. Khan, B. Yang, and F. Zheng, The set of all orthogonal complex structures on the flat 6-torus, Adv.
Math. 319 (2017), 451-471.
\bibitem{KN}S. Kobayashi, K. Nomizu, Foundations of Differential Geometry. Vol. II. Interscience Publishers John
Wiley-Sons, Inc., New York-London-Sydney, 1969.
\bibitem{LU} M. Lejmi and M. Upmeier M, Closed almost K\"ahler 4-manifolds of constant non-negative Hermitian holomorphic sectional
curvature are K\"ahler. Tohoku Math J, 2020, 72: 581-594.
\bibitem{LZ} Y. Li and F. Zheng, Complex nilmanifolds with constant holomorphic sectional curvature, arXiv: 2103.09571,
to appear in Proc. Amer. Math. Soc.
\bibitem{LY} K. Liu, X. Yang, Ricci curvatures on Hermitian manifolds. Trans. Amer. Math. Soc. 369 (2017), no. 7,
5157-5196.
\bibitem{LY2} K. Liu, X. Yang, Minimal Complex Surfaces with Levi–Civita Ricci-flat Metrics, Acta Mathematica Sinica, English Series, volume 34, (2018) 1195–1207.
\bibitem{RZ} P. Rao and F. Zheng, Pluriclosed manifolds with constant holomorphic sectional curvature, arXiv:
2104.01319, to appear in Acta. Math. Sinica (English Series).
\bibitem{SS} T. Sato, K. Sekigawa, Hermitian surfaces of constant holomorphic sectional curvature, Math. Zeit., 205
(1990), 659-668.
\bibitem{T} K. Tang, Holomorphic sectional curvature and K\"ahler-like metric (in Chinese), Scientia Sinica Mathematica. Vol 51 (2021), issue 12, 2013-2024.
\bibitem{VYZ} L. Vezzoni, B. Yang and F. Zheng, Lie groups with flat Gauduchon connections, Math. Zeit. 293 (2019), 597-608.
\bibitem{WY} J. Wang and X. Yang, Curvatures of real connections on Hermitian manifolds, Xiv:1912.12024.
\bibitem{WYZ} Q. Wang, B. Yang, and F. Zheng, On Bismut flat manifolds, Trans. Amer.Math.Soc., 373 (2020), 5747-5772.
\bibitem{YZ1} B. Yang and F. Zheng, On curvature tensors of Hermitian manifolds, Comm. Anal. Geom. 26 (2018), no. 5, 1193--1220.
\bibitem{YZ2} B. Yang and F. Zheng, On compact Hermitian manifolds with flat Gauduchon connections, Acta Math. Sinica (English series). 34 (2018), 1259--1268.
\bibitem{YZZ} S.-T. Yau, Q. Zhao, and F. Zheng, On Strominger K\"ahler-like manifolds with degenerate torsion,
arXiv:1908.05322v2.
\bibitem{ZZ19} Q. Zhao and F. Zheng, Complex nilmanifolds and K\"ahler-like connections, J. Geom. Phys. 146 (2019), 103512, 9pp.
\bibitem{ZZ1} Q. Zhao and F. Zheng, Strominger connection and pluriclosed metrics, arXiv:1904.06604v3.
\bibitem{ZZ2} Q. Zhao and F. Zheng, On Gauduchon K\"ahler-like manifolds, arXiv: 2108.08181.
\bibitem{Z} F. Zheng, Complex Differential Geometry. AMS/IP Studies in Advanced Mathematics, 18. American
Mathematical Society, Providence, RI; International Press, Boston, MA, 2000.

\end{thebibliography}
\end{document}